\newtheorem{thm}{Theorem}
\newtheorem{lem}[thm]{Lemma}
\newtheorem{prop}[thm]{Proposition}
\newtheorem{de}[thm]{Definition} 
\newtheorem{rem}[thm]{Remark}
\newtheorem{ques}[thm]{Question}
\newcommand{\BZ}{{\mathbb{Z}}}
\newcommand{\BQ}{{\mathbb{Q}}}
\newcommand{\BR}{{\mathbb{R}}}
\newcommand{\BE}{{\mathcal{E}}}
\newcommand{\BB}{{\mathcal{B}}}
\newcommand{\BC}{{\mathbb{C}}}
\newcommand{\DD}{{\mathcal{D}}}
\newcommand{\LL}{{\mathcal{L}}}
\newcommand{\Si}{{\Sigma}}
\newcommand{\fs}{{\mathfrak{s}}}
\newcommand{\NN}{{\mathcal{N}}}
\newcommand{\UU}{{\mathcal{U}}}
\newcommand{\al}{\alpha}
\newcommand{\g}{\gamma}
\newcommand{\I}{{\mathrm I}}
\DeclareMathOperator{\Span}{Span}
\DeclareMathOperator{\Hom}{Hom}
\DeclareMathOperator{\pr}{proj}
\DeclareMathOperator{\tr}{tr}
\DeclareMathOperator{\SU}{SU}
\DeclareMathOperator{\SO}{SO}
\DeclareMathOperator{\res}{res}
\DeclareMathOperator{\ev}{ev}
\begin{document}

\title[On the skein module of $\Sigma_g \times S^1$]{On the skein module of the product of a surface and a circle}
 
\author{ Patrick M. Gilmer}
\address{Department of Mathematics\\
Louisiana State University\\
Baton Rouge, LA 70803\\
USA}
\email{patgilmer@gmail.com}
\urladdr{\url{www.math.lsu.edu/~gilmer}}

\author{Gregor Masbaum}
\address{CNRS, Sorbonne Universit{\'e}, Universit{\'e} Paris Diderot, Institut de Math{\'e}matiques de Jussieu-Paris Rive Gauche, IMJ-PRG,
Case 247, 4 pl. Jussieu,
75252 Paris Cedex 5\\
FRANCE }
\email{gregor.masbaum@imj-prg.fr}
\urladdr{\url{webusers.imj-prg.fr/~gregor.masbaum/}}

\begin{abstract}  Let $\Sigma$ be a closed oriented surface of genus $g$. We show that the
Kauffman bracket skein module of $\Sigma \times S^1$  over the field of rational functions in $A$ has 
dimension at least $2^{2g+1}+2g-1.$
\end{abstract}

\subjclass[2010]{57N10, 57M99, 57R56}

\keywords{Witten-Reshetikhin-Turaev invariant, 
Topological Quantum Field Theory,
Verlinde formula, 
Bernoulli polynomials}

\maketitle\section{Introduction}\label{s1}
The Kauffman bracket \cite{K} skein module of a $3$-manifold was defined independently  by Przytycki \cite{P} and Turaev \cite{T1}. It can be defined over any commutative ring containing an invertible element usually denoted $A$. In this paper, we only consider this skein module over $Q(A)$, the field of rational functions in
a variable $A$. Given a connected oriented 3-manifold $M$, the skein module $K(M)$ is the vector space over $Q(A)$ generated by the set of isotopy classes of framed unoriented links in the interior of $M$ (including the empty link $\emptyset$ ) modulo the subspace generated by the Kauffman relations \cite{K}:

$$\begin{minipage}{0.4in}\includegraphics[width=0.4in]{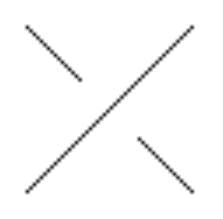}\end{minipage}= A \begin{minipage}{0.4in}\includegraphics[width=0.4in]{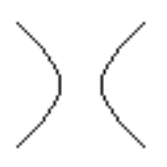}\end{minipage} + A^{-1} \begin{minipage}{0.4in}\includegraphics[width=0.4in]{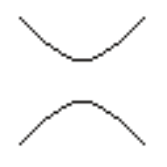}\end{minipage}$$
	$$L \cup \hbox{unknot} = (-A^{2} - A^{-2}) L$$

In the first relation, the  intersection of three framed links (with the blackboard framing) within a 3-ball in $M$ is depicted and  in the exterior of this 3-ball, all 3 links should be completed identically.
In the second relation by an unknot, we mean a loop which bounds a disk in the complement of $L$ with  the framing on the unknot extending to a non-zero normal vector field on the disk.  
Employing Kauffman's argument \cite{K} that justifies his bracket approach to the Jones polynomial,   one sees  that $K(S^3)$ is one dimensional and is generated by the class of  the empty link.
Given $\fs \in K(S^3)$, let $\left<  \fs  \right> $ denote its Kauffman bracket, which is defined to be the scalar   $\left<  \fs  \right>  \in Q(A)$ for which    $\left<  \fs  \right>  \emptyset $ and $\fs$ represent the same element in $K(S^3)$. 

Przytycki   \cite{P3} showed that
for connected sums of $3$-manifolds one has 
$K(M \# N)=K(M) \otimes K(N)$.
So the problem of computing $K(M)$ for compact $M$ reduces to computing $K(M)$ for prime manifolds. The complete answer is known for these prime 3-manifolds without boundary: 
 for the lens spaces and  
 for $S^2\times S^1$ (Hoste and Przytycki \cite{HP1,HP}), for
 the quaternionic manifold (Gilmer and Harris \cite{GH}), 
 for an infinite family of prime prism manifolds  (Mroczkowski \cite{Mr}), 
 and for the $3$-torus (see  the history of this calculation below). 
For compact 3-manifolds with boundary $K(M)$ is known  for  $\mathrm{I}$-bundles over surfaces (Przytycki \cite{P2}), 
for the product of a two-holed disk and 
a 
circle (Mroczkowski and Dabkowski \cite{MK}), 
for the exteriors of 2-bridged knots (Le \cite{Le}),
 for the exteriors of torus knots (March{\'e} \cite{M})
and for exteriors of 2-bridged links (Le and Tran \cite{LT}). 
There has been a lot of work on the algebra structure on the skein module of a surface times an interval (sometimes with coefficients different from $Q(A)$), and also on the connection between skein modules with character varieties.  We would like to mention \cite{BW,FKL,Le2} for some recent papers on this.

In the present paper, we wish to study the skein module of $\Sigma_g \times S^1$, where $\Sigma_g$ is an oriented closed surface of genus $g$. As already mentioned, two cases are known. Hoste and Przytycki showed that $K(\Sigma_0 \times S^1)= K(S^2 \times S^1)$ is one dimensional and is generated by the class of  the empty link. That $K(S^2 \times S^1)$ is at most one dimensional can easily be seen using the Dirac string trick.
Quite recently \cite{C}, Carrega  showed $K(\Sigma_1 \times S^1) = K(S^1 \times S^1 \times S^1)$  is  generated by a certain set of nine skein classes. Then the first author \cite{G} showed that Carrega's nine generators are linearly independent and thus $\dim(K(\Sigma_1 \times S^1)) =9.$
 Carrega's proof uses the Frohman-Gelca `product to sum formula' \cite{FG} 
(see also Sallenave \cite{S}) 
for the product in the skein algebra  $K(\Sigma_1 \times\I)$.

Our main result is to generalize the linear independence aspect of these results.
\begin{thm}\label{main} 
$\dim(K(\Sigma_g \times S^1)) \ge  2^{2g+1}+2g-1$. 
\end{thm}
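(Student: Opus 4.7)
The strategy is to exhibit $N := 2^{2g+1}+2g-1$ explicit skein classes in $K(\Sigma_g \times S^1)$ together with $N$ linear functionals separating them, so that the resulting evaluation matrix is non-singular. The linear functionals arise from Witten--Reshetikhin--Turaev-type TQFTs at odd primes $p$: specialising $A$ to a primitive $4p$-th root of unity makes the $\SO(3)$-TQFT $\Vp$ available, assigning to $\Sigma_g$ a finite-dimensional space with dimension given by the Verlinde formula. Each skein class $s \in K(\Sigma_g \times S^1)$, viewed as a colored skein in the closed $3$-manifold $\Sigma_g \times S^1$, then has a quantum invariant $\Zp(\Sigma_g \times S^1, s) \in \BC$; equivalently, $s$ acts on $\Vp(\Sigma_g)$ through the skein algebra of $\Sigma_g \times \I$, and $\Zp(\Sigma_g \times S^1, s)$ is the trace of this action obtained by closing up with the $S^1$-direction.

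For the candidate classes I would take, first, for each $\al \in H_1(\Sigma_g; \BZ/2) \cong (\BZ/2)^{2g}$ a multicurve $L_\al$ on $\Sigma_g$ representing $\al$, viewed as a framed link in $\Sigma_g \times \{t_0\}$; and, second, for each such $\al$, the link $L_\al \cup c$, where $c := \{\ast\} \times S^1$ is a vertical fibre disjoint from $L_\al$. This already produces $2 \cdot 2^{2g} = 2^{2g+1}$ classes indexed by $\al$ and by a binary choice of whether to include the vertical loop. The remaining $2g-1$ classes are modelled on Carrega's nine-generator set for $g=1$, where $9 = 2^3 + 2 - 1$: they come from configurations involving multicurves pushed to different heights of the $S^1$-factor, combined with additional vertical loops attached to a standard system of $2g$ generating curves of $\Sigma_g$.

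The main obstacle, and the technical heart of the argument, is to prove non-singularity of the $N \times N$ evaluation matrix $[\Zp(\Sigma_g \times S^1, s_j)]_{p, j}$ for a carefully chosen set of primes. The trace of the action of $L_\al$ on $\Vp(\Sigma_g)$ can be computed via a pants decomposition of $\Sigma_g$ and iterated application of the Frohman--Gelca--Sallenave product-to-sum formula, producing explicit sums over admissible colorings involving values of Bernoulli polynomials at rational arguments (as the paper's keyword list suggests). The mod-$2$ homology class of $L_\al$ is detected by parity invariants of these coloring sums, the vertical loop by an extra quantum weight from the fibre factor, and the $2g-1$ additional classes must be separated from the homological ones by exploiting the Galois action on cyclotomic values as $p$ varies. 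Establishing this non-vanishing of the determinant -- via the cyclotomic and Bernoulli-polynomial analysis -- is where I expect most of the work to go.
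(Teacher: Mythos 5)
The overall ingredients you reach for (evaluation of skeins via $\SO(3)$-TQFT, the Verlinde formula, Bernoulli polynomials) do appear in the paper, but your plan is missing the two structural ideas that make the argument actually go through, and without them the non-singularity you need at the end is not close to being established.

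First, you never invoke the $\BZ/2$-homology grading
$K(M)=\bigoplus_{x\in H_1(M;\BZ/2)} K_x(M)$. This is the crucial reduction. Your $2^{2g+1}$ multicurve classes $L_\al$ and $L_\al\cup c$ land in $2^{2g+1}$ \emph{distinct} summands $K_{(\al,0)}$ and $K_{(\al,1)}$, so their contribution to the dimension bound is automatic (once one checks each evaluation is nonzero, which for non-separating curves is a one-line fusion computation)--no determinant needed. The entire difficulty then concentrates in the two summands $K_{(0,0)}$ and $K_{(0,1)}$, where the paper proves $\dim\geq g+1$ and $\dim\geq g$ respectively by evaluating $\{e_{2c}\}$ and $\{e_{2s-1}\}$ placed along a vertical fibre. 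Adding $2^{2g+1}-2$ from the other summands recovers $2^{2g+1}+2g-1$. By not grading, you force yourself to prove full rank of a single $N\times N$ matrix mixing all homology classes and all your $2g-1$ unspecified ``Carrega-type'' skeins, which is much harder and is exactly the part you leave undone.

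Second, even inside $K_{(0,0)}$ and $K_{(0,1)}$, your plan ``choose $N$ primes, show a determinant is nonzero'' is not what is needed and is not set up to work. The paper instead treats the whole family $A\mapsto\left<M,L\right>_A$ as an element of the $Q(A)$-vector space $\BC^\UU_{a.e.}$ and proves (its Theorem~\ref{p}) that the functions $\{p^j\}_{j\in\BZ}$ are $Q(A)$-linearly independent there -- a genuinely nontrivial statement, proved by an analytic-continuation argument, not by Galois theory or a finite determinant. Combined with the Verlinde-formula expressions $D_g^{(2c)}=\sum_j\varphi_j(c)p^j$ and $D_g^{(p-2s-1)}=\sum_j\widetilde\varphi_j(s)p^j$, where the degrees of the $\varphi_j,\widetilde\varphi_j$ are pinned down using nonvanishing of Bernoulli numbers, this immediately gives the two dimension counts. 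Your sketch gestures at the Bernoulli analysis but has no analogue of Theorem~\ref{p}, so there is no mechanism to turn the coloring sums into a linear-independence conclusion over $Q(A)$. That, together with the unaddressed $2g-1$ extra classes, is where the argument breaks down.
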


Note that in genus zero and one this inequality is an equality. We don't know whether it is an equality in higher genus. 

\section{The evaluation map}\label{sec-eval}

The main ingredient in the proof of Theorem~\ref{main} is the Witten-Reshetikhin-Turaev TQFT invariant $\left<M,L\right>_{A}$ of a pair consisting of a closed oriented 
   $3$-manifold $M$ 
and an unoriented framed link $L$ in $M$.  Here $A$ is a primitive $2p$-th root of unity where $p=2d+1$ 
is
an odd integer greater than or equal to 
three.
We will use the so-called $\SO(3)$-invariants and we will adopt the skein theory approach of Blanchet, Habegger, Masbaum and Vogel \cite{BHMV1, BHMV2} to these  invariants.  We recall the   definition of $\left<M,L\right>_{A}$ in \S \ref{s2}.  
 
Let us consider the family of these invariants
associated to a choice of $A$ from the set \[\UU =\{ e^{ \pi i s/{p} }| \text{$p$ odd, } p >1, (s, 2p)=1  \}. \]
For every  unoriented framed link $L$ in $M$, the assignment $A\mapsto \left<M,L\right>_{A}$ defines a complex-valued function on $\UU$. As observed in \cite[Thm 1.7(i)]{BHMV2}, this function only depends on the skein class of $L$.

Let $\BC^\UU_{a.e.}$ 
denote the set of complex valued functions defined almost everywhere on $\UU$ where we consider two functions  to be equal if they agree almost everywhere on $\UU$. 
Because the rational functions which occur as coefficients of skein elements and skein relations   can have only finitely many poles, the assignment $A\mapsto \left<M,L\right>_{A}$ extends to a map $$\ev : K(M) \longrightarrow \BC^\UU_{a.e.}$$ which we call the {\em evaluation map}. Moreover, this map is $Q(A)$-linear where we  make $\BC^\UU_{a.e.}$ a vector space over $Q(A)$ by setting $R f(\g)= R(\g) f(\g)$ where $R \in Q(A)$, $f \in  \BC^\UU_{a.e.}$, $\gamma \in \UU.$    We summarize the above mentioned properties in the following Theorem.

\begin{thm}[Evaluation map]\label{wrt}
There is linear map of $Q(A)$ vector spaces $ \ev : K(M) \rightarrow \BC^\UU_{a.e.}$ which sends  a skein class represented by the link $L$ in $M$
to the  function in $\BC^\UU_{a.e.}$ which sends   $A \in \UU$ to  $\left<M,L\right>_{A}$, for almost all elements $A$ of $\UU$.
 \end{thm}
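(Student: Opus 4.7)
The plan is to build the evaluation map in three steps: first define it on individual framed links, then extend $Q(A)$-linearly to formal combinations, and finally verify that it descends through the Kauffman relations to the skein module $K(M)$.

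First, for a single framed link $L$ in $M$ and any $A \in \UU$, the quantity $\left<M,L\right>_{A}$ is a well-defined complex number (as recalled in \S\ref{s2}). So the assignment $A \mapsto \left<M,L\right>_{A}$ is a genuine function $\UU \to \BC$, hence represents an element of $\BC^\UU_{a.e.}$. This gives a set-theoretic map from the set of isotopy classes of framed links in $M$ to $\BC^\UU_{a.e.}$.

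Second, I would extend this $Q(A)$-linearly to the free $Q(A)$-vector space on framed links: for $\fs = \sum_{i=1}^n R_i L_i$ with $R_i \in Q(A)$ and $L_i$ framed links, set
$$\ev(\fs)(A) \;=\; \sum_{i=1}^n R_i(A)\left<M,L_i\right>_{A}$$
for each $A \in \UU$ outside the union of the finite pole sets of the $R_i$. Because each $R_i$ has only finitely many poles and the sum is finite, this formula is defined on a cofinite subset of $\UU$ and so produces a well-defined element of $\BC^\UU_{a.e.}$; $Q(A)$-linearity then holds by construction with respect to the $Q(A)$-module structure on $\BC^\UU_{a.e.}$ given in the excerpt.

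Third, I would check that $\ev$ annihilates the Kauffman relators. The key input is that for each fixed $A_0 \in \UU$ the specialization $L \mapsto \left<M,L\right>_{A_0}$ is a $\BC$-valued skein invariant satisfying the Kauffman relations with $A$ replaced by $A_0$ --- this is built into the skein-theoretic framework of \cite{BHMV1,BHMV2} and is precisely the cited statement \cite[Thm 1.7(i)]{BHMV2}. Since the coefficients $A$, $A^{-1}$, $-A^2-A^{-2}$ appearing in the Kauffman relations are Laurent polynomials with no poles on $\UU$, each Kauffman relator evaluates to zero pointwise on all of $\UU$, hence represents $0$ in $\BC^\UU_{a.e.}$. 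Therefore $\ev$ descends to a $Q(A)$-linear map on the quotient $K(M)$, which is the asserted evaluation map.

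The only subtlety is the bookkeeping of poles of $Q(A)$-coefficients, and this is precisely the reason for passing to the almost-everywhere quotient $\BC^\UU_{a.e.}$: a finite set of poles is discarded by this equivalence, and any given skein element of $K(M)$ only contributes finitely many poles. There is no deeper obstacle in this statement; the hard work of the paper lies later, in extracting enough linearly independent images under $\ev$ to prove Theorem~\ref{main}.
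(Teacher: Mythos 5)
Your argument is correct and is essentially the same as the paper's: both reduce the claim to the facts that (i) for each fixed root of unity the WRT invariant satisfies the Kauffman relations at that specialization (the content of \cite[Thm 1.7(i)]{BHMV2}) and (ii) rational-function coefficients introduce only finitely many poles, which is exactly what the passage to $\BC^\UU_{a.e.}$ absorbs. Your presentation just makes explicit the ``free module modulo relators'' bookkeeping that the paper leaves implicit.
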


 The idea of using the evaluation  map 
 to study linear independence in skein modules was already used in \cite{GH,G}.
 We remark that March{\'e} and Santharoubane's very interesting paper \cite{MS} contains strong results about an $\SU(2)$-version of the evaluation map. We shall make some more comments about the connection  between their work and ours in \S \ref{sec6}.

The following Theorem \ref{p} gives a new tool for getting information from the evaluation map. To state it, note that there is an algebra structure on $\BC^\UU_{a.e.}$ given by the pointwise multiplication of functions.
We can view the order of a root of unity as a function defined on  $\UU$. 
More precisely we consider
the function on $\UU$ which assigns $p$ to $e^{{s\pi i}/{p}}$ 
(where $p>1$ is odd and 
      $(s,2p)=1$).
We  denote this function by $p$. Then, of course, $p^j$ will assign $p^j$ to $e^{{s\pi i}/{p}}$. 

 In the computations that lead to Theorem~\ref{main}, the image of the evaluation map always lies in the subspace of  $\BC^\UU_{a.e.}$ spanned by such functions. Therefore the following result will be important for us.
 
 \begin{thm} \label{p}  The subset of  the $Q(A)$ vector space $\BC^\UU_{a.e.}$  given by $\{p^j |j \in \BZ \}$ is  linearly  independent. \end{thm}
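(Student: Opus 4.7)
The plan is to argue by contradiction. Suppose there is a nontrivial relation $\sum_{j=m}^{n} R_j(A)\, p^j = 0$ in $\BC^\UU_{a.e.}$, with $R_j \in Q(A)$ not all zero. After multiplying through by a common denominator (which can only enlarge the exceptional set by the finitely many points of $\UU$ where the denominator vanishes), I may assume each $R_j$ is a Laurent polynomial in $A$. Hence there is a finite set $S \subset \UU$ such that $\sum_{j=m}^{n} R_j(\gamma)\, p(\gamma)^j = 0$ for every $\gamma \in \UU \setminus S$.

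The argument then proceeds in two steps. First I fix an odd integer $p > 1$ and consider the elements $\gamma \in \UU$ with $p(\gamma) = p$; these are precisely the $\varphi(2p)$ primitive $2p$-th roots of unity $e^{\pi i s/p}$ with $(s, 2p) = 1$. At each of the at least $\varphi(2p) - |S|$ such $\gamma$ lying outside $S$, the Laurent polynomial $F_p(A) := \sum_{j=m}^{n} R_j(A)\, p^j$ vanishes. But $F_p$ has degree span (highest minus lowest exponent of $A$) bounded by a constant independent of $p$, determined only by the individual $R_j$'s, so for all sufficiently large $p$ it has strictly more zeros in $\BC^\ast$ than its degree allows. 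This forces $F_p \equiv 0$ as an element of $Q(A)$.

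Next, I choose $A_0 \in \BC^\ast$ avoiding the finitely many zeros of the nonzero $R_j$'s. Evaluating $F_p(A_0) = 0$ at this $A_0$ gives $\sum_{j=m}^{n} R_j(A_0)\, p^j = 0$ for infinitely many odd integers $p$. This is a Laurent polynomial in the single variable $p$ with complex coefficients vanishing at infinitely many integers, so all its coefficients vanish: $R_j(A_0) = 0$ for every $j$. By the choice of $A_0$, every $R_j$ must then be the zero Laurent polynomial, contradicting the assumption.

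The argument is conceptually simple; the only real care is in the bookkeeping. The crucial point to verify is that ``$= 0$ in $\BC^\UU_{a.e.}$'' is a mild enough notion that after clearing denominators one still retains a relation holding on the complement of a finite subset of $\UU$, so that $\varphi(2p) \to \infty$ as $p \to \infty$ through odd integers suffices for the first step. Beyond this, both steps reduce to the elementary fact that a Laurent polynomial of bounded degree cannot have arbitrarily many zeros.
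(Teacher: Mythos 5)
Your proof is correct, and it takes a genuinely different --- and in fact more elementary --- route than the paper's. The paper's argument is complex-analytic: after clearing denominators it reads the relation $\sum_j R_j(A)\,p^j=0$ as saying that the points $(e^{s\pi i/p},1/p)$ lie on the plane algebraic curve $C:\ \sum_j R_j(z)\,w^j=0$, then (via the Chinese Remainder Theorem) constructs a sequence $z_p\to e^{\pi i/2r}$ with $(z_p,1/p)\in C$, and compares a local holomorphic branch $F$ of the cover $\pr\colon C\to\BC$ with the explicit function $G(z)=(\pi i-2r\log z)/(\pi i)$, which agrees with $F$ on a set having a limit point; the contradiction is that the analytic continuation of $F$ is finitely multivalued (algebraic) while that of $G$ is not, being a logarithm. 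Your argument bypasses all of the Riemann-surface and analytic-continuation machinery with two rounds of root counting: for each fixed odd $p$ the Laurent polynomial $F_p(A)=\sum_j R_j(A)p^j$ has degree span bounded independently of $p$ but vanishes at $\varphi(2p)-|S|$ points of $\UU$, so $F_p\equiv 0$ once $\varphi(2p)$ is large; then, specializing at a generic $A_0$ where no nonzero $R_j$ vanishes, $\sum_j R_j(A_0)p^j=0$ for infinitely many integers $p$ forces $R_j(A_0)=0$ and hence $R_j\equiv 0$ for all $j$. Both steps rest only on the fact that a nonzero Laurent polynomial cannot have more zeros in $\BC^\ast$ than its degree span allows, together with $\varphi(2p)\to\infty$. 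Your version is shorter, uses no complex analysis, and also makes the ``almost everywhere'' bookkeeping (the finite exceptional set $S$ coming from the cleared denominators) explicit, which the paper glosses over when it asserts $(u,1/p)\in C$ for $u\in\UU$.
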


We give the proof of this theorem  in \S \ref{sec-p}.

\section{Proof of Theorem~\ref{main}}\label{sec-outline}

We can now give the proof of Theorem~\ref{main} modulo some results that will be proved in later sections.

We use the fact that the  skein module $K(M)$ has a direct sum decomposition 
\begin{equation} K(M)=\oplus_{x \in H_1(M;\BZ/2)} K_x(M) \label{ds} \end{equation} 
into submodules  
$K_x(M)$ which are defined similarly to $K(M)$ except one considers only links representing the given
$\BZ/2$-homology class. This  
holds because the skein relations respect the corresponding decomposition
of the vector space over 
$Q(A)$
generated by the set of isotopy classes of framed unoriented links in $M$ (including the empty link) where an isotopy class  of an unoriented framed link lies in the summand indexed by the $\BZ/2$-homology class that the link represents. Naturally,   the empty link is said to represent the zero homology class.

We will denote by 
$(x,y)$
 the element in $H_1(\Sigma_g \times S^1; \BZ/2)$ which projects to $x \in H_1(\Sigma_g ; \BZ/2)$ and $y \in \{0,1\}=H_1( S^1; \BZ/2)$.
  The embedding of $D^2 \times S^1$ into a neighborhood of  $* \times S^1$ 
   in $\Sigma_g \times S^1$, induces
  maps $$i_0: K_0(D^2 \times S^1)\rightarrow K_{(0,0)}(\Sigma_g \times S^1)$$
 $$i_1: K_1(D^2 \times S^1)\rightarrow K_{(0,1)}(\Sigma_g \times S^1)$$ where we use the decomposition $$K( D^2 \times S^1) = K_0(D^2 \times S^1) \oplus K_1(D^2 \times S^1)$$ into subspaces indexed by $H_1(D^2 \times S^1; \BZ/2) = \BZ/2$.

   The strategy now is to compose $i_0$ and $i_1$ with the evaluation map and thereby get lower bounds for the dimensions of $K_{(0,0)}(\Sigma_g \times S^1)$ and $K_{(0,1)}(\Sigma_g \times S^1)$.   Precisely, in \S\ref{prfTHeo} we will show the following:

  \begin{thm}\label{eo} 
Let $g \geq 1$. The 
  image 
  $(\ev \circ\, i_0)(K_0(D^2 \times S^1))$ is the $g+1$-dimensional subspace of $\BC^\UU_{a.e.}$ with basis 
       $$ \{ p^{g-1},p^{g+1}, p^{g+3},\ldots, p^{3g-3} \} \cup \{ p^g \} .$$
 The image
   $(\ev \circ\, i_1)(K_1(D^2 \times S^1))$ is the  $g$-dimensional subspace of $\BC^\UU_{a.e.}$ with the same basis except for $p^g$ which is omitted.
\end{thm}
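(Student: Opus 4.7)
The strategy is to compute $\langle \Sigma_g \times S^1, e_c \rangle_A$ explicitly for each Jones--Wenzl idempotent $e_c$, and then assemble these evaluations into a description of the images. Recall that $K(D^2 \times S^1) = Q(A)[\alpha]$ with $\alpha$ the core circle, and that the JW (equivalently Chebyshev) basis $\{e_c\}_{c \geq 0}$ is related to $\{\alpha^n\}_{n \geq 0}$ by an integer triangular change of basis that preserves parity. Consequently $K_\epsilon(D^2 \times S^1) = \Span\{e_c : c \equiv \epsilon \pmod 2\}$, and it suffices to understand $\ev \circ i_\epsilon$ on the $e_c$'s.

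The key step is the TQFT trace formula. Interpreting $\Sigma_g \times S^1$ as the mapping torus of $\Id_{\Sigma_g}$ and cutting along $\Sigma_g \times \{0\}$, the vertical circle $\{*\} \times S^1$ coloured by $e_c$ becomes a vertical arc giving the identity cobordism on the BHMV space $V_p(\Sigma_g;*,c)$. Hence for $c$ admissible in the $\SO(3)$-theory ($c$ even, $0 \leq c \leq p-3$),
$$\langle \Sigma_g \times S^1, i_0(e_c) \rangle_A = \dim V_p(\Sigma_g;*,c).$$
For $c$ odd, a parity-refined version of this trace formula is required to extract the correct (nonzero) invariant, exploiting the nontrivial $\BZ/2$-homology class of $i_1(e_c) \subset \Sigma_g \times S^1$.

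Next, the Verlinde formula expresses $\dim V_p(\Sigma_g;*,c)$ as $(p/2)^{g-1}$ times a sum of Chebyshev evaluations of the form $U_c(\cos(j\pi/p))/\sin(j\pi/p)^{2g-2}$, with $j$ ranging over an appropriate subset of $\{1,\dots,p-1\}$ determined by the $\SO(3)$-theory. The classical Bernoulli-polynomial identity, by which $\sum_{j=1}^{p-1} \csc^{2k}(j\pi/p)$ is a polynomial in $p^2$ of degree $2k$, converts each such sum into a polynomial in $p$ of degree at most $3g-3$. A parity count on the resulting monomials shows that this polynomial lies in $\Span\{p^{g-1}, p^{g+1}, \dots, p^{3g-3}\}$, with an additional $p^g$ contribution appearing only in the even-$c$ case, arising from the $\SO(3)$-vs-$\SU(2)$ correction in the Verlinde formula. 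Varying $c$ within its parity class and using a triangularity argument on the leading $c$-dependent terms, together with Theorem \ref{p} for linear independence of distinct powers of $p$, one concludes that the evaluations span precisely the claimed $(g+1)$-dimensional (respectively $g$-dimensional) subspace.

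The principal obstacle is the odd-$c$ case: the naive $\SO(3)$ trace formula gives $\dim V_p(\Sigma_g;*,c)=0$ for $c$ odd, yet $(\ev \circ i_1)(K_1)$ must span a $g$-dimensional subspace. Pinning down the parity-refined trace formula that correctly incorporates the $\BZ/2$-homology grading of links in $\Sigma_g \times S^1$, and then checking that the resulting polynomials lie in $\Span\{p^{g-1}, \dots, p^{3g-3}\}$ without producing a $p^g$ term, is the heart of the argument. A parallel careful accounting of the $p^g$-correction in the even-$c$ case is what pins down the full $(g+1)$-dimensional image.
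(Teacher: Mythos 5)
Your overall strategy mirrors the paper's: put $e_c$ along $\{*\}\times S^1$, use the trace property of TQFT to identify the even-color evaluations with Verlinde dimensions $D_g^{(2c)}=\dim V_p(\Sigma_g,2c)$, express these as polynomials in $p$ with $c$-dependent coefficients, and then combine a triangularity/degree argument with Theorem~\ref{p} to pin down the span. That part is sound.

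However, you have correctly identified the odd-color case as the crux but have not actually resolved it, and your diagnosis of where the difficulty lies is off. There is no need for a ``parity-refined trace formula'' tracking $\BZ/2$-homology, and it is not that the naive trace gives zero. The paper's key move is the $\SO(3)$ \emph{recoloring identity} (BHMV1, Lemma 6.3(iii)): at a primitive $2p$-th root of unity, a link component colored $2s-1$ may be recolored $p-2s-1$ without changing the invariant (valid for $p\geq 2s+1$, hence for almost all $A\in\UU$). Since $p$ is odd, $p-2s-1$ is even, so $\langle \ell_{2s-1}\rangle_A = D_g^{(p-2s-1)}$ -- a genuine, nonzero Verlinde dimension, now with the color depending on $p$. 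The whole odd-case analysis then reduces to the substitution $c=(p-1)/2-s$ into the polynomial $D_g^{(2c)}$. Your proposal never reaches this mechanism.

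Furthermore, the two structural facts you invoke -- that $D_g^{(2c)}$ lands in $\Span\{p^{g-1},p^{g+1},\dots,p^{3g-3}\}\cup\{p^g\}$, and that the $p^g$ term is \emph{absent} after the odd-color substitution -- are nontrivial and are not established by a ``parity count'' or an ``$\SO(3)$-vs-$\SU(2)$ correction.'' In the paper the first fact comes from the explicit residue formula (Eq.~(\ref{f5})): the residue piece contributes $p^{g-1}$ times a polynomial even in $p$, while the binomial term contributes the isolated $p^g$ piece (Eq.~(\ref{f6})). The disappearance of $p^g$ under $c\mapsto (p-1)/2-s$ is the content of Lemma~\ref{bernproof}, which is proved by induction on genus using the fusion rules and specific Bernoulli/Faulhaber identities (Lemmas~\ref{lem-last} and~\ref{lem-last2}); this is a genuine computation, not a formal parity observation. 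Without the recoloring identity and without these Bernoulli-polynomial lemmas, your argument for the odd case -- and for the precise shape of the even-case span -- does not close.
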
 

Note that this theorem relies on Theorem~\ref{p}, which tells us that the above mentioned powers of $p$ are linearly independent over $Q(A)$.
 Thus we have that
\begin{equation} \label{00} \dim(K_{(0,0)}(\Sigma_g \times S^1 )) \ge g+1 \quad \text{and} \quad \dim(K_{(0,1)}(\Sigma_g \times S^1 )) \ge g.\end{equation}

Theorem~\ref{main} follows from this and the following Lemma. Let $\NN$ denote the 
$2^{2g+1}-2$ elements of $H_1(\Sigma_g \times S^1 ;\BZ/2)$ which are different from $(0,0)$ or $(0,1)$.
\begin{lem}\label{lem6} For all $x\in \NN$, we have $ \dim(K_x(\Sigma_g \times S^1 ))\ge 1.$
\end{lem}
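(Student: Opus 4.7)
The plan is to exhibit, for each $x\in\NN$, an explicit link $L_x\subset\Sigma_g\times S^1$ representing $x$ in $\BZ/2$-homology, and then to certify that its skein class is non-zero by comparing with the $SL_2(\BC)$-character variety at the specialization $A=-1$. This is a tool orthogonal to the evaluation map of Section~\ref{sec-eval}, since the value $A=-1$ does not lie in $\UU$.

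Since the excluded classes are $(0,0)$ and $(0,1)$, for $x=(a,b)\in\NN$ one has $a\neq 0$ in $H_1(\Sigma_g;\BZ/2)$. I would choose an embedded simple closed curve $\gamma\subset\Sigma_g$ with $[\gamma]=a$ and set
\[
L_x=\begin{cases}\gamma\times\{*\}, & b=0,\\ (\gamma\times\{*\})\sqcup(\{q\}\times S^1), & b=1,\end{cases}
\]
for some $q\in\Sigma_g\setminus\gamma$. Then $[L_x]=x$ in $H_1(\Sigma_g\times S^1;\BZ/2)$, so $[L_x]\in K_x(\Sigma_g\times S^1)$. By the theorem of Bullock (extended to closed $3$-manifolds by Przytycki--Sikora), specializing at $A=-1$ the Kauffman bracket skein module surjects, with nilpotent kernel, onto the coordinate ring $\BC[X]$ of the $SL_2(\BC)$-character variety $X$ of $\pi_1(\Sigma_g\times S^1)$, sending an $n$-component framed link $K_1\sqcup\cdots\sqcup K_n$ to $\prod_i(-\chi_{K_i})$, where $\chi_K(\rho)=\tr\rho([K])$. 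Evaluating at the trivial representation $\rho_0=I$ gives $\chi_{K_i}(\rho_0)=2$ for each component, so the image of $[L_x]$ takes the nonzero constant value $(-2)^n$. Hence $[L_x]|_{A=-1}$ is not nilpotent, and in particular is non-zero in the $A=-1$ specialization.

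The main obstacle is to promote this non-vanishing from the $A=-1$ specialization back up to $Q(A)$. If $[L_x]=0$ in $K_x(\Sigma_g\times S^1)$, then clearing denominators of a witnessing $Q(A)$-relation yields a nonzero $f(A)\in\BZ[A^{\pm 1}]$ with $f(A)\cdot[L_x]=0$ in the integral skein module $K_{\BZ[A^{\pm 1}]}(\Sigma_g\times S^1)$. When $f(-1)\neq 0$, specializing at $A=-1$ immediately contradicts the preceding paragraph, so the delicate case is when $[L_x]$ is pure $(A+1)$-torsion. This possibility is genuine in principle, since $K_\BC|_{A=-1}(\Sigma_g\times S^1)$ is infinite-dimensional (having $\BC[X]$ as a quotient) while $K_{Q(A)}(\Sigma_g\times S^1)$ is finite-dimensional (by Gunningham--Jordan--Safronov); ruling it out for the specific geometric classes $[L_x]$ via a Nakayama / semicontinuity argument at the prime $(A+1)$ is the main technical point of the proof.
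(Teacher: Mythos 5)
Your approach is genuinely different from the paper's, and unfortunately the gap you flag at the end is a real one that the argument as written does not close. The difficulty is structural: the two specializations of $\BZ[A^{\pm 1}]$ you are trying to combine, namely $A\mapsto -1$ and $\BZ[A^{\pm 1}]\hookrightarrow Q(A)$, are not compatible in the direction you need. A module such as $\BZ[A^{\pm 1}]/(A+1)$ is nonzero after specializing at $A=-1$ but dies after tensoring with $Q(A)$, so non-vanishing at $A=-1$ (even non-nilpotence in $\BC[X]$) gives no information over $Q(A)$. Your own reduction makes this precise: you can only conclude a contradiction when the annihilating polynomial $f$ has $f(-1)\neq 0$, and the case $f\in (A+1)$ is left open. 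The phrase ``Nakayama / semicontinuity argument at the prime $(A+1)$'' does not point to an actual proof: Nakayama controls whether a set of elements generates a finitely generated module modulo a maximal ideal, not whether a single element is $(A+1)$-torsion, and no finite generation of $K_{\BZ[A^{\pm 1}]}(\Sigma_g\times S^1)$ is available (nor is the GJS finiteness you cite, which postdates this paper and concerns the generic fiber, usable here). Also, a small side point: for a \emph{closed} $3$-manifold the $A=-1$ skein module is not an algebra, so ``nilpotent kernel'' is not the right phrasing, though the map to $\BC[X]$ and evaluation at the trivial representation that you actually use are fine.

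The paper sidesteps this issue entirely by evaluating only at roots of unity in $\UU$, organized into the $Q(A)$-vector space $\BC^\UU_{a.e.}$: a nonzero rational function is nonzero at almost every point of $\UU$, so $\ev$ is genuinely $Q(A)$-linear and $\ev(L)\neq 0$ directly forces $L\neq 0$ over $Q(A)$. Concretely, Lemma~\ref{flat} computes $\left<\Sigma_g\times S^1,\gamma\right>_A=\bigl(-p/(A-A^{-1})^2\bigr)^{g-1}\neq 0$ for $\gamma$ a nonseparating curve, giving the case $x=([\gamma],0)$, and Lemma~\ref{trans} transports this to all of $\NN$ using orientation-preserving diffeomorphisms of $\Sigma_g\times S^1$ (mapping class group transitivity on nonseparating curves, plus a twist $(a,z)\mapsto(a,\phi(a)\cdot z)$ to move between $(c,0)$ and $(c,1)$). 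You should either adopt the $\UU$-evaluation mechanism (which is exactly what makes the non-vanishing transfer to $Q(A)$) or supply an actual argument excluding $(A+1)$-torsion for your classes $[L_x]$; as it stands the proposal is not a proof.
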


The  computation recorded in Lemma \ref{flat} below, which we will perform in \S \ref{s2},  together with Theorem \ref{wrt} proves Lemma~\ref{lem6} in  the special case where $x \in\NN$ 
 is represented by a non-separating simple closed curve on 
$\Sigma_g \times 1 \subset \Sigma_g\times S^1$.

\begin{lem}\label{flat} Let $\gamma$ denote a non-separating simple closed curve on $\Sigma_g \times 1$ with a framing tangent to $\Sigma \times 1$.
One has
\[ \left<\Sigma_g\times S^1, \gamma \right>_A= \left( \frac {-p} {(A-A^{-1})^2}\right)^{g-1} .
\] In particular, we have $\ev(\gamma)\not= 0$
and hence Lemma~\ref{lem6} holds for $x=([\gamma], 0)$.
\end{lem}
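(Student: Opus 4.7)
The plan is to use the TQFT trace formula. Since $\Sigma_g\times S^1$ is the mapping torus of the identity on $\Sigma_g$, the gluing axiom of the BHMV TQFT gives
\[
\langle \Sigma_g\times S^1,\gamma\rangle_A \;=\; \tr_{V_p(\Sigma_g)}\gamma_\ast,
\]
where $\gamma_\ast\in\End(V_p(\Sigma_g))$ is the operator by which the skein element $\gamma\subset\Sigma_g\times I$ acts on the TQFT space. Since $\gamma$ is non-separating, I would choose a handlebody $H_g$ with $\partial H_g=\Sigma_g$ in which $\gamma$ bounds a meridian disk of a $1$-handle with core $e$. In the basis of $V_p(\Sigma_g)\cong K(H_g)$ indexed by admissible colorings of a trivalent spine of $H_g$ containing the edge $e$, the operator $\gamma_\ast$ is diagonal: it multiplies the summand with $e$ colored by $c$ by the encirclement eigenvalue
\[
\mu_c\;=\;-\bigl(A^{2(c+1)}+A^{-2(c+1)}\bigr),
\]
which is the standard scalar by which a single loop encircling a color-$c$ Jones--Wenzl idempotent acts.

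Hence
\[
\tr\gamma_\ast\;=\;\sum_c\mu_c\cdot\dim V_p(\Sigma_{g-1,2};c,c),
\]
where $\Sigma_{g-1,2}$ denotes $\Sigma_g$ cut along $\gamma$ with both boundary circles colored by $c$. The Verlinde formula with marked points gives
\[
\dim V_p(\Sigma_{g-1,2};c,c)\;=\;\sum_\lambda S_{0\lambda}^{2-2g}\,S_{c\lambda}^{2},
\]
with $S_{a\lambda}=\sqrt{4/p}\,\sin\bigl((a+1)(\lambda+1)\pi/p\bigr)$ the $\SO(3)$ modular $S$-matrix (writing things out for $A=e^{i\pi/p}$). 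Interchanging the order of summation reduces the computation to the orthogonality identity
\[
\sum_{c\text{ even},\,0\le c\le p-3}\mu_c\,S_{c\lambda}^{2}\;=\;\delta_{\lambda,0},
\]
granted which, and using $S_{00}=(A-A^{-1})/(i\sqrt p)$, one immediately obtains
\[
\tr\gamma_\ast=S_{00}^{2-2g}=\left(\frac{-p}{(A-A^{-1})^2}\right)^{g-1}.
\]

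The hard part will be the orthogonality identity. I would prove it by direct Fourier analysis on roots of unity: writing $S_{c\lambda}^{2}=(2-A^{2(c+1)(\lambda+1)}-A^{-2(c+1)(\lambda+1)})/p$ and expanding the product $\mu_c\,S_{c\lambda}^{2}$, the sum over $c$ reduces to a short linear combination of basic sums of the form $\sum_{k\text{ odd},\,1\le k\le p-2}A^{2kn}$ for a few integers $n$ depending on $\lambda$. A geometric-series computation using $A^{2p}=1$ shows this basic sum equals $-1$ when $p\nmid n$ and $p-1$ when $p\mid n$, and the Kronecker delta emerges because the critical exponent $n=-\lambda$ is divisible by $p$ precisely when $\lambda=0$. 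For a general $A=e^{is\pi/p}\in\UU$ with $(s,2p)=1$, the same argument applies with $2k$ replaced by $2ks$ throughout, or alternatively one can invoke Galois equivariance of the BHMV invariants, since all quantities in sight live in the cyclotomic field $\BQ(A)$.
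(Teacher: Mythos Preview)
Your approach is valid and genuinely different from the paper's. The paper computes the invariant directly from the surgery definition: it uses the description of $\Sigma_g\times S^1$ as surgery on a zero-framed connected sum of $g$ Borromean rings, places $\Omega_p$ on every surgery component, trades the color $1$ on $\gamma'$ for $p-3$ via \cite[Lemma~6.3(iii)]{BHMV1}, and then performs fusion on the encircled strands. The $p$-admissibility condition for the triple $(i,i,p-3)$ forces $i=d-1$, collapsing the $\Omega_p$-sum to a single term and yielding $(\mathcal{D}^2/\langle e_{d-1}\rangle^2)^{g-1}$. This is a short, purely skein-theoretic computation that never invokes the Verlinde formula. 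Your route via the trace formula and Verlinde is more representation-theoretic; it trades the surgery/fusion picture for an $S$-matrix orthogonality identity.

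There is, however, a concrete error in your verification of that identity. A geometric-series computation actually gives
\[
T(n)\;:=\;\sum_{\substack{k\ \mathrm{odd}\\ 1\le k\le p-2}} A^{2kn}\;=\;
\begin{cases}
d=(p-1)/2 & \text{if } p\mid n,\\[2pt]
-1/(A^{2n}+1) & \text{if } p\nmid n,
\end{cases}
\]
not $p-1$ and $-1$; the values you quote are those for the sum over \emph{all} $k\in\{1,\ldots,p-1\}$, which is the $\SU(2)$ range, not the $\SO(3)$ one. Fortunately this does not break the argument. Expanding $\mu_c\,S_{c\lambda}^2$ and summing over $c$, the exponents that occur are $\pm 1,\ \pm\lambda,\ \pm(\lambda+2)$, so the basic sums appear in conjugate pairs $T(n)+T(-n)$; and one checks directly that $T(n)+T(-n)=-1$ whenever $p\nmid n$, while $T(0)=d$. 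The Kronecker delta then emerges exactly as you say, from $n=\pm\lambda$ being divisible by $p$ precisely when $\lambda=0$. So your orthogonality identity $\sum_c\mu_c S_{c\lambda}^2=\delta_{\lambda,0}$ is correct, and with it your computation of $S_{00}^{2-2g}$ matches the stated formula; only the justification via individual values of $T(n)$ needs to be replaced by the pairing $T(n)+T(-n)$.
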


Here  $[\gamma]$ denote the class $\gamma$ represents in $H_1(\Sigma_g ;\BZ/2)$.

The general case of Lemma~\ref{lem6} now follows from this special case and the following:

\begin{lem}\label{trans} For all $x, y \in  \NN$, we have $ K_x(\Sigma_g \times S^1 ) \simeq K_{y} (\Sigma_g \times S^1 ) $.
\end{lem}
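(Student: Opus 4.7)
The plan is to show that for any two elements $x, y \in \NN$ there is a self-homeomorphism $\phi$ of $\Sigma_g \times S^1$ whose induced map on $H_1(-;\BZ/2)$ sends $x$ to $y$. Since such a $\phi$ yields a $Q(A)$-linear automorphism of $K(\Sigma_g \times S^1)$ that respects the decomposition (\ref{ds}) and restricts to an isomorphism $K_x \xrightarrow{\sim} K_{\phi_*(x)}$, this will prove the lemma. Writing elements of $H_1(\Sigma_g \times S^1;\BZ/2) = H_1(\Sigma_g;\BZ/2) \oplus H_1(S^1;\BZ/2)$ as pairs $(a, \epsilon)$, we see that $\NN$ is exactly the set $\{(a,\epsilon) : a \neq 0\}$, so the action of the homeomorphism group needs to move the first coordinate among nonzero vectors and flip the second coordinate independently.

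I would combine two families of homeomorphisms. First, for $\varphi \in \Mod(\Sigma_g)$ the \emph{product homeomorphism} $\varphi \times \Id_{S^1}$ acts on $H_1$ by $(a,\epsilon) \mapsto (\varphi_*(a), \epsilon)$. By the classical surjection $\Mod(\Sigma_g) \twoheadrightarrow \Sp(2g, \BZ)$ (composed with reduction mod $2$) and the transitive action of $\Sp(2g,\BZ/2)$ on nonzero vectors of $(\BZ/2)^{2g}$ (Witt's theorem), this family realizes every prescribed change $(a,\epsilon) \mapsto (a',\epsilon)$ with $a, a' \neq 0$ while keeping $\epsilon$ fixed.

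Second, for a two-sided simple closed curve $\gamma \subset \Sigma_g$ I would introduce a \emph{shear homeomorphism} $\phi_\gamma$. Pick an annular neighborhood $\gamma \times (-1,1) \subset \Sigma_g$ and a smooth $f: (-1,1) \to \BR$ with $f \equiv 0$ near $-1$ and $f \equiv 2\pi$ near $+1$; define $\phi_\gamma$ to be the identity outside $\gamma \times (-1,1) \times S^1$ and, inside, to translate the $S^1$-coordinate by $f(s)$, where $s \in (-1,1)$ is the normal coordinate to $\gamma$. A transversality computation, tracking how the $S^1$-coordinate of a transverse cycle representing $a \in H_1(\Sigma_g;\BZ/2)$ acquires one extra winding each time the cycle crosses $\gamma$, shows that $\phi_\gamma$ acts on $H_1$ by $(a,\epsilon) \mapsto (a, \epsilon + \omega(a, [\gamma]))$, where $\omega$ denotes the $\BZ/2$-intersection form on $\Sigma_g$. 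The generator of $H_1(S^1)$ is fixed since $\{*\} \times S^1$ may be chosen outside the twist region.

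Combining the two steps: given $x = (a,\epsilon)$ and $y = (a',\epsilon')$ in $\NN$, a product homeomorphism first moves $x$ to $(a', \epsilon)$; if $\epsilon = \epsilon'$ we are done, otherwise nondegeneracy of $\omega$ on $H_1(\Sigma_g;\BZ/2)$ provides a simple closed curve $\gamma$ with $\omega(a', [\gamma]) = 1$, and $\phi_\gamma$ then sends $(a',\epsilon)$ to $(a',\epsilon') = y$. The only substantive point is verifying the $H_1$-action of $\phi_\gamma$ claimed above; this is routine once orientation conventions are fixed, and is the expected main technical obstacle.
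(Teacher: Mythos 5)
Your proposal is correct and follows essentially the same route as the paper. Step one (moving the $\Sigma_g$-component among nonzero classes via $\varphi \times \Id_{S^1}$) is the paper's use of the change-of-coordinates principle, just phrased through $\Sp(2g,\BZ/2)$-transitivity instead of citing Farb--Margalit directly; step two is the same twist: the paper's diffeomorphism $(a,z)\mapsto(a,\phi(a)\cdot z)$ with $\phi\colon\Sigma_g\to S^1$ Poincar\'e dual to $[\gamma]$ is exactly your shear $\phi_\gamma$ once you take $\phi$ supported in an annular neighborhood of $\gamma$, and your formula $(a,\epsilon)\mapsto(a,\epsilon+\omega(a,[\gamma]))$ agrees with the paper's effect on homology.
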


\begin{proof}[Proof of Lemma~\ref{trans}]
We use the fact that whenever we have an orientation-preserving diffeomorphism $f$ of an oriented $3$-manifold $M$ sending a {\em mod} $2$ homology class $x$ to $y=f_\star(x)$, then  $K_x(M)$ and $K_y(M)$ are isomorphic, the isomorphism being induced by $f$ in the obvious way. 

Thus it is enough to show that any two $x,y\in\NN$ are related by an orientation-preserving diffeomorphism of $\Sigma_g \times S^1$. 
Now any $x\in\NN$ is of the form $(c,0)$ or $(c,1)$ for some $c\in  H_1(\Sigma_g; \BZ/2) \setminus \{0\}$. Any two such classes $c,c'$ are related by an orientation-preserving diffeomorphism of $\Sigma_g$, as any such $c$ can be represented by a non-separating simple closed curve $\gamma$ on $\Sigma_g$, and any two non-separating simple closed curves on $\Sigma_g$ are related by an orientation-preserving diffeomorphism \cite{FM}. Thus we can easily deal with the case when $x$ and $y$ are both of the form $(c,0)$, or both of the form $(c,1)$.  

To complete the proof, we only need an orientation preserving diffeomorphism $\Sigma_g \times S^1 \rightarrow \Sigma_g \times S^1$ sending $(c,0)$  
to $(c,1)$ for some fixed $c\in  H_1(\Sigma_g; \BZ/2) \setminus \{0\}$. It can be constructed as follows. Represent $c$ by some non-separating curve $\gamma$ and 
pick $ \Phi \in H^1(\Sigma_g;\BZ)= \Hom( H_1(\Sigma_g;\BZ),\BZ)$ which takes the value $\pm 1$ on the integral homology class of $\gamma.$
There is 
a
smooth map $\phi:\Sigma_g \rightarrow S^1$, with  $\Phi= \phi_* : H_1(\Sigma_g;\BZ)\rightarrow H_1(S^1;\BZ)$.
Consider the diffeomorphism of $\Sigma_g \times S^1$ which sends $(a,z)$ to 
      $(a,\phi(a)\cdot z) $, 
where the dot indicates multiplication in $S^1$. This diffeomorphism 
preserves orientation and 
sends
 $(c, 0)$
to $(c,1)$.
\end{proof}

  \section{Proof of Theorem \ref{p}}\label{sec-p}
  
  Instead of using $p^j$ to denote the function in $\BC^\UU$ which assigns $p^j$ to $e^{{s\pi i}/{p}}\in \UU$, we denote  this function by $f_j$, in this section.

 Clearly $f_j$ is non zero.
 We suppose 
 there is  a linear dependence relation among  two or more of the $f_j$ and find a contradiction. Multiply this relation by $f_{-j}$ where $j$ is the highest index for $f$  appearing in the relation, and obtain:
  
  \[ R_n(A) f_{-n} + R_{n-1}(A) f_{1-n} + \dots + R_{1}(A) f_{-1} + R_{0}(A)f_0=0,
\]  
 where the $R_j(A)$ are rational functions of $A$ and $R_n(A) \ne 0$, $R_0(A) \ne 0$, and $n > 0.$
 Clearing denominators, we can assume, without loss of generality, that the $R_j(A)$ are polynomials in $A$.       
 Consider the locus  $C$ in $\BC \times \BC$ with coordinates 
 $\{z,w\}$ given by 
  \[ R_n(z) w^{n} + R_{n-1}(z) w^{n-1} + \dots  + R_{1}(z) w +   R_{0}(z)=0.
\]  
One has that $C$ is a finite union of irreducible algebraic curves.
Thus  $C$ defines a multivalued function on $\BC$ which assigns to $z \in \BC$ as values the elements of $\pr^{-1}(z)$ where  $\pr:C \rightarrow \BC$ denotes the projection on the first factor.
We say $z \in \BC$ is good if $z$ has a neighborhood $U_z$ for which $\pr: \pr^{-1} U_z \rightarrow U_z$ is a finite-sheeted trivial covering space. There are only a finite number of points $z \in \BC$ which are not good. The  function on $U_z$ whose graph is given by a single sheet of this cover is holomorphic and  determines a multivalued 
function on  $\BC$ by analytic continuation \cite{A}. This multivalued function will be the multivalued function  
 defined  by one of the irreducible components of $C$.

 For $u=    
 e^{s \pi i/p} 
 \in \UU$, we have that $(u,1/p)$ lies on $C$.

 Pick $r$ odd, such that  
 $e^{\pi i/2r}$ 
 is good. By the Chinese remainder theorem, there is  an infinite  sequence of integers  
 $p \ge 3$
 with $p = 1 \pmod{r}$  
 and $p = 3 \pmod{4}.$  Let $s_p=({p-1})/{2r}$.
  Then $z_p=e^{(s_p \pi i)/p}$ is a primitive $2p$-th root of unity   and  $\lim_{p \rightarrow \infty}z_p= e^{\pi i/2r}$. Thus for $p$ large in this sequence, 
 $z_p\in U_{e^{\pi i/2r}}$. By the pigeonhole principle, for infinitely many of these $p$, $(z_p,1/p)$ must  lie on the same sheet.   
 This sheet is the graph of some holomorphic function $F$ defined on $U_{e^{\pi i/2r}}$. So there is a subsequence of these $p$'s,  with $z_p\in  U_{e^{\pi i/2r}}$, and $\lim_{p \rightarrow \infty}z_p= e^{\pi i/2r}$,  $F(z_p)= 1/p$, 
  and $F({e^\frac{\pi i}{2r}}) =\lim_{p \rightarrow \infty}F(z_p)= 0$. 
 
Without loss of generality, we may assume $U_{e^{\pi i/2r}}$ misses the nonpositive real axis $\BR^-$. Let $\log$ denote the branch of the log function
defined on $\BC-\BR^-$ which takes values with imaginary part  in $(-\pi,\pi)$.  Consider \[ G(z)=\frac {\pi i -2r \log(z)}{\pi i}.\] It is an analytic function defined on $U_{e^{\pi i/2r}}$ with
$G(z_p)= 1/p$, 
 and \[G({e^\frac{\pi i}{2r}}) =\lim_{p \rightarrow \infty}G(z_p)= 0.\]
 
 Thus $F$
 and
 $G$ are 
 holomorphic  functions defined on  $U_{e^{\pi i/2r}}$ which agree at an infinite set of points with a limit point, and agree at this limit point. It follows that they agree on $U_{e^{\pi i/2r}}$. So they should have the same analytic extension to a multivalued function. But
the extension of $F$  is finitely multivalued  whereas the extension of $G$ is not, as it involves the logarithm function. This is the contradiction we seek. \qed

\section{Surgery formula for the WRT invariant and proof of Lemma \ref{flat}}\label{s2}

The skein module $K(D^2 \times S^1)= K( \I \times \I \times S^1)$ has an algebra structure given by stacking. It can be described as the ring of polynomials in the variable 
 $z$ with 
 coefficients
in $Q(A)$, where $z$ is  $\frac 1 2 \times \frac 1 2 \times  S^1 $ framed by one of the $\I$ factors.
A useful basis $\{ e_i | i \ge 0 \}$ of $K(D^2 \times S^1)$ as a vector space is defined recursively by 
$e_0= \emptyset $, $e_1=z,$ and $e_{i+1}= z e_i -e_{i-1}$. The bracket evaluation of this after embedding 
$D^2 \times S^1$ 
in the standard way in $S^3$ is denoted $\left<e_i \right>$. Moreover $\left<e_i \right>= (-1)^i [i+1]$, where we let $[n]$ denote the quantum integer 
$(A^{2n}-A^{-2n} )/(A^2-A^{-2})$.  
If $\al$ is a framed simple closed curve in a 3-manifold, then  `$\al$ colored $i$' describes the result of replacing $\al$ by $e_i$ in the $D^2 \times S^1$ neighborhood of $\alpha$ specified by the framing of $\al$.

Here is a  skein definition of  the invariant $\left<M,L\right>_{A}$  normalized as in \cite{BHMV2}. Let $A\in\UU$. For general $3$-manifolds $M$ (but not for $M=\Sigma_g\times S^1$, as will be explained in Remark~\ref{rem10} below) the invariant depends on an additional choice, namely the choice of a square root $\DD$ of the number   $\DD^2=
 -p/(A^2-A^{-2})^2$. ($\DD$ was called $\eta^{-1}$ in \cite{BHMV2}.) 
 Recall our notation  $d=(p-1)/2$ 
and define $\Omega_{p}\in K(D^2 \times S^1)$ by 
\begin{equation}\label{omega-eq}
\Omega_{p}=  \sum_{i=0} ^{d-1} \left<e_i \right>  e_i~.
\end{equation} 
\begin{de}\label{def}
Let  $M$ be a closed connected oriented 3-manifold and  $L$  be a framed link in  $M$. 
 Choose a framed link $\LL$ in $S^3$  whose linking matrix has signature zero such that $S^3$ surgered along $\LL$ is $M$, and  choose $L' \subset S^3 \setminus \LL$  a framed link whose image after the surgery is framed isotopic to $L$  in $M$.  
For $A \in \UU$, we define
\[\left<M,L\right>_{A}= \DD^{-1-\sharp\LL} 
\left<  \LL(\Omega_p) \sqcup L' \right>
 ,\]
where $\sharp\LL$ is the number of components of $\LL$, 
the bracket $\left< \ \ \right>$ on the R.H.S. is the Kauffman bracket evaluated at $A\in\UU$ (normalized so that $\left< \emptyset\right>=1$), 
and   $\LL(\Omega_p)$ denotes the result of 
 replacing a neighborhood of each component of $\LL$ by a copy of $\Omega_{p}$, using the framings on the components of  $\LL$ to determine the identification of  each neighborhood with 
      $D^2 \times S^1$. 
 \end{de}

\begin{rem}\label{rem10} {\em With the above hypothesis on the signature of the linking matrix the number of components of $\LL$ is congruent to the first Betti number $\beta_1(M)$ modulo ${2}$. Thus if the first Betti number is odd (which is the case for $M=\Sigma_g\times S^1$)  the invariant only involves $\DD^2$ and so  does not depend on the choice of $\DD$.  Note that in this case the invariant lies in the cyclotomic field $\BQ(A) \subset \BC$. But if $\beta_1(M)$  is even, the invariant is $\DD$ times an element of $\BQ(A)$.  For the purpose of studying $K(M)$ through the evaluation map, one may therefore want to normalize the invariant differently in that case.
}\end{rem} 

\begin{proof}[Proof of Lemma \ref{flat}]
There is a well known surgery description of $\Sigma_g \times S^1$. It is given by a  zero framed link $\LL$ with $2g+1$ components with all pairwise linking numbers zero as well.  It is the connected sum of $g$ copies of the borromean rings \cite[p.192]{Li}. See Figure~\ref{fig1} below where we include  $\gamma'$ as well.  According to 
      Definition~\ref{def}, 
$\left<\Sigma_g \times S^1, \gamma \right>_{A}$  is  obtained by multiplying 
$\DD^{- 2g-2}$ times the  bracket evaluation of the skein obtained by decorating  all $2g+1$ components with $\Omega_p$ and coloring $\gamma'$ with  $1$.  We can and do trade this color $1$ with $p-3$ without changing the bracket at a $2p$-th root of unity by  \cite[Lemma 6.3 (iii)]{BHMV1}.   
      Next, using the expression (\ref{omega-eq}) for $\Omega_p$, 
the long component in the figure which is decorated $\Omega_p$  is expanded as a sum  over 
$0 \le i \le d-1$ of $\left< e_i \right>$ times the diagrams  obtained   by recoloring this long component $i$.
 A fusion is performed to the three parallel strands which are encircled by the leftmost component colored $\Omega_p$. 
The three strands are colored $i$, $i$ and $p-3$ and the range of values
for $i$ is given by $0 \le i \le d-1$.  By a well-known  skein-theoretic argument going back to Roberts \cite[Fig. 7]{Ro}, only the colorings where $i$, $i$ and $p-3$ are $p$-admissible survive. This means only the term $i=d-1$ survives. (See {\em e.g.} \cite[\S 3]{GM1} for the $p$-admissibility conditions.)  Doing similar fusions on the  $2g-1$  pairs of strands now colored $d-1$ which are each encircled by a curve colored $\Omega_p$ leads to  $$\left<\Sigma_g \times S^1, \gamma \right>_{A}= \Bigg( \frac {\DD^2}{\left<e_{d-1} \right>^2}\Bigg)^{g-1}.$$ This can be rewritten as given in the lemma.
\end{proof}
 
\begin{figure}[h]
\centerline{\includegraphics[width=4.5in]{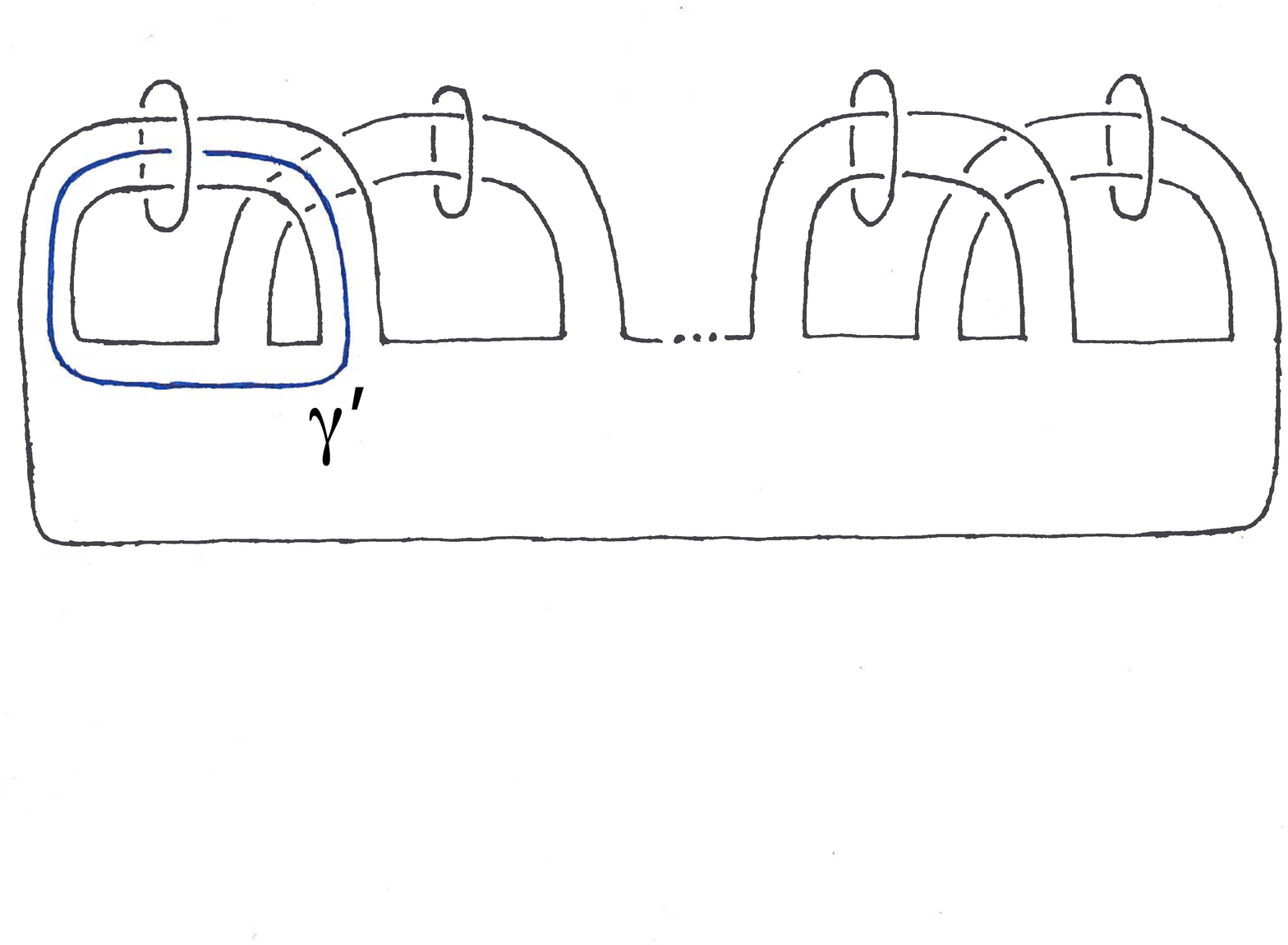}}
\caption{Surgery description of $(\Sigma_g \times S^1, \gamma)$}\label{fig1}
\end{figure}

 \section{ Verlinde formula and the proof of Theorem \ref{eo}}\label{prfTHeo}

The Verlinde formula gives the dimension of the TQFT vector space $V_p(\Si_g,2c)$ assigned to $\Si_g,$ with a point colored $2c$, where  $A$ is a
 primitive $2p$-th root of unity.
We denote this dimension by $D_g^{(2c)}$ with $p$ absent from the notation but understood. We will use the fact that $D_g^{(2c)}$ can be written as a polynomial $p$ and $c$. We will give more details about this polynomial in \S \ref{sec7} where the following two lemmas will be proved.  Let 
  \[ \BE= \{ g-1,g+1, \ldots, 3g-3 \}= \{g-1+2j \, \vert \,  j=0,1, \ldots, g-1\} .
   \]

\begin{lem}\label{p^g} Let  
      $g\geq 1$.
We have $$D_g^{(2c)}=\sum_{j\in\BE \cup \{g\}} \varphi_j(c)\, p^j~,$$ where $\varphi_j(c)$ is a polynomial in $c$ of degree exactly $3g-2-j$. 
\end{lem}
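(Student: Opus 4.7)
The plan is to derive the lemma from the Verlinde formula for $D_g^{(2c)}$ by converting a trigonometric sum into a polynomial in $p$ and $c$ via classical Bernoulli-polynomial identities. For the $SO(3)$ theory at odd level $p$, with $d=(p-1)/2$, one has
$$D_g^{(2c)} = \left(\tfrac{p}{2}\right)^{g-1} \sum_{k=1}^{d} \frac{U_{2c}(\cos\theta_k)}{\sin^{2g-2}(\theta_k)}, \qquad \theta_k = \frac{2k\pi}{p},$$
where $U_{2c}$ is the Chebyshev polynomial of the second kind of degree $2c$, whose coefficients are themselves polynomials in $c$.

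The first step is to expand $U_{2c}$ in powers of $\cos\theta$ and use $\cos^2\theta = 1-\sin^2\theta$ to reduce the whole expression to a $c$-polynomial linear combination of the basic trigonometric sums
$$A_m(p) = \sum_{k=1}^{d} \csc^{2m}(\theta_k), \qquad B_m(p) = \sum_{k=1}^{d} \cos(\theta_k)\,\csc^{2m}(\theta_k), \qquad 0 \le m \le g-1.$$
The second step is to invoke the closed-form evaluation of these sums as polynomials in $p$ via Bernoulli-polynomial identities (whence the keyword in the abstract). The Bernoulli framework tells us that $A_m(p)$ is an \emph{even} polynomial of degree $2m$ in $p$, while $B_m(p)$ has the opposite parity structure. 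Multiplying by the $p^{g-1}$ prefactor, the $A_m$-contributions produce precisely the powers $p^{g-1},p^{g+1},\ldots,p^{3g-3}$, which is the set $\BE$, and the $B_m$-contributions produce the single extra power $p^g$, with all higher odd shifts $p^{g+2},p^{g+4},\ldots$ canceling out for parity reasons that must be checked.

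The third step is to verify that $\deg_c \varphi_j(c) = 3g-2-j$ exactly. The upper bound $\leq 3g-2-j$ follows from total-degree bookkeeping: a summand in the Chebyshev expansion of degree $\ell$ in $\cos\theta$ contributes a polynomial of degree at most $\ell$ in $c$, and the top total degree in $(p,c)$ of $D_g^{(2c)}$ is $3g-2$. Equality requires a non-vanishing argument for the leading coefficient of each $\varphi_j$; the plan is to isolate the leading Chebyshev term $2^{2c}\cos^{2c}\theta$ and track its contribution through the Bernoulli expansion of the $A_m$ and $B_m$. The main obstacle is precisely this non-vanishing claim, together with the parity argument that excludes the unwanted powers of $p$. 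As a fallback one can verify the lemma directly for small $g$ (the base case $g=1$ gives $D_1^{(2c)} = (p-1)/2 - c$, which matches the claim with $\varphi_0(c) = -\tfrac12-c$ and $\varphi_1(c) = \tfrac12$) and induct on $g$ via a pair-of-pants decomposition, which respects both the polynomial structure and the weighted degree count $\deg p = \deg c = 1$.
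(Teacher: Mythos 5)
Your approach is genuinely different from the paper's: you propose to evaluate the Verlinde trigonometric sum directly via Bernoulli identities, whereas the paper starts from an already-derived residue formula (Eq.~(6), quoting Prop.~6.6 of [GM2]) which presents $D_g^{(2c)}$ as a polynomial in $p$ and $c$ with the relevant structure built in. As written, however, your sketch has gaps at exactly the points that carry the content of the lemma, and one intermediate assertion is false. The claim that the coefficients of $U_{2c}$ ``are themselves polynomials in $c$'' is incorrect: $U_{2c}(x)=\sum_k (-1)^k\binom{2c-k}{k}(2x)^{2c-2k}$, and the leading coefficient $2^{2c}$ is not polynomial in $c$. Relatedly, the reduction to the finite family $A_m(p),B_m(p)$ with $0\le m\le g-1$ is incomplete: since $\deg_{\cos\theta}U_{2c}=2c$ grows without bound while $g$ is fixed, substituting $\cos^2\theta=1-\sin^2\theta$ produces positive as well as negative powers of $\sin\theta$, so you also need the sums $\sum_k\sin^{2m}\theta_k$ and $\sum_k\cos\theta_k\sin^{2m}\theta_k$ for $m>0$, with the number of required sums depending on $c$. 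That the final expression is nonetheless a polynomial in $c$ is a nontrivial fact that your sketch does not establish.

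More importantly, you explicitly defer the two assertions that \emph{are} the lemma: (i) that only the powers $p^j$ with $j\in\BE\cup\{g\}$ occur (your ``parity reasons that must be checked''), and (ii) that each $\varphi_j$ has degree exactly $3g-2-j$ (your ``non-vanishing argument''). In the paper, (i) falls out of the residue formula: the residue term contributes $p^{g-1}$ times a polynomial which is even in $p$ because $\mathrm{s}(t)=\sinh(t)/t$ is an even power series, while the binomial term contributes only $p^g\cdot Y(c)$; and (ii) is obtained by computing the top homogeneous part $F_{3g-2}(p,c)$ explicitly (Prop.~11), which identifies the leading coefficient of $\varphi_{g-1+2k}$ with a nonzero multiple of $B_{2k}$ and that of $\varphi_g$ with a nonzero multiple of $B_1$. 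You would need analogues of both of these explicit computations; until they are supplied, the proposal records the correct target shape (total degree $3g-2$, parity pattern from Bernoulli vanishing, nonvanishing of even Bernoulli numbers), but not a proof. The $g=1$ base case check and the idea of inducting via a pair-of-pants decomposition are fine in spirit (indeed the paper inducts in essentially this way for the companion Lemma~9), but that induction itself requires a nontrivial lemma about the fusion coefficients, so it does not by itself close the gap.
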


\begin{lem}\label{p^go} Let  
       $g\geq 1$.
  We have $$D_g^{(p-2s-1)}=\sum_{j\in\BE} \widetilde\varphi_j(s)\, p^j~,$$ where $\widetilde\varphi_j(s)$ is a polynomial in $s$ of degree exactly $3g-2-j$. 
\end{lem}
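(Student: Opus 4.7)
My plan is to deduce Lemma~\ref{p^go} from Lemma~\ref{p^g} by the substitution $c=\tfrac{p-1}{2}-s$, combined with a parity analysis of the underlying Verlinde trigonometric sum that rules out the unwanted $p^g$ (and other wrong-parity) contributions.

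Since $p$ is odd, the equation $p-2s-1=2c$ has the integer solution $c=\tfrac{p-1}{2}-s$, and Lemma~\ref{p^g} therefore gives
\[
D_g^{(p-2s-1)} \;=\; \sum_{j\in \BE\cup\{g\}} \varphi_j\!\left(\tfrac{p-1}{2}-s\right)\, p^j.
\]
Each factor $\varphi_j(\tfrac{p-1}{2}-s)$ is, after expansion, a polynomial in $p$ of degree $3g-2-j$ with coefficients in $\BQ[s]$. Multiplying by $p^j$ and collecting powers of $p$ yields $D_g^{(p-2s-1)} = \sum_{k=g-1}^{3g-2} \widetilde\varphi_k(s)\, p^k$ for certain $\widetilde\varphi_k\in\BQ[s]$. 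It then suffices to show (i) $\widetilde\varphi_k=0$ for $k\in\{g,g+2,\ldots,3g-2\}$, and (ii) $\deg\widetilde\varphi_j=3g-2-j$ for $j\in\BE$.

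For (i), I would compute $D_g^{(p-2s-1)}$ a second time, directly from the Verlinde formula for $SO(3)$ at level $p$. After substituting $k+1=p-2s$ and using the elementary identity $\sin(\pi b(p-2s)/p)=\sin(2\pi bs/p)$ (valid because the labels $b=2a+1$ are odd), the Verlinde sum takes the form
\[
\bigl(\tfrac{p}{2}\bigr)^{g-1}\!\!\sum_{b\text{ odd}}\frac{\sin(2\pi bs/p)}{\sin(\pi b/p)^{2g-1}}
\;=\;\bigl(\tfrac{p}{2}\bigr)^{g-1}\!\!\sum_{b\text{ odd}}\frac{U_{2s-1}(\cos(\pi b/p))}{\sin(\pi b/p)^{2g-2}},
\]
via the Chebyshev identity $\sin(2s\theta)/\sin\theta=U_{2s-1}(\cos\theta)$. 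Because $U_{2s-1}$ is an \emph{odd} polynomial, each resulting sum has the form $\sum_{b\text{ odd}} \cos^{2i+1}(\pi b/p)/\sin^{2g-2}(\pi b/p)$; a Bernoulli-polynomial/residue evaluation of such cotangent-type sums (standard for Verlinde-type sums, cf.\ Zagier) produces a polynomial in $p$ of degree at most $2g-2$ containing only \emph{even} powers of $p$. Multiplying by the prefactor $(p/2)^{g-1}$ then yields only the powers $p^{g-1},p^{g+1},\ldots,p^{3g-3}\in\BE$. Comparing the two expansions and invoking Theorem~\ref{p} (linear independence of the $p^j$) gives (i); (ii) then follows by reading off the leading coefficient in $s$, matching the degree of $\varphi_j$ under the substitution.

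The main obstacle is the parity assertion of the previous paragraph, namely that the Bernoulli evaluation of $\sum_{b\text{ odd}} \cos^{2i+1}/\sin^{2g-2}$ contains only even powers of $p$. This is precisely what distinguishes the two lemmas: under the parametrization $k=2c$ of Lemma~\ref{p^g} one instead encounters the \emph{even} Chebyshev polynomial $U_{2c}$, giving $\cos^{2i}/\sin^{2g-2}$-sums whose Bernoulli expansion contains both parities of $p$ and hence produces the extra $p^g$ term present there.
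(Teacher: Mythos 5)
Your plan correctly identifies both halves of what needs to be proved (vanishing of wrong-parity powers of $p$, and exact degree of $\widetilde\varphi_j$), and the substitution $c=\frac{p-1}{2}-s$ is indeed where the paper starts. But the proposal stops at exactly the point where the paper has to do real work, and it flags this itself: you write that ``the main obstacle is the parity assertion,'' namely that the Bernoulli/cotangent-sum evaluation of $\sum_{b\ \mathrm{odd}}\cos^{2i+1}(\pi b/p)/\sin^{2g-2}(\pi b/p)$ yields only \emph{even} powers of $p$. This is precisely the content of the paper's Lemma~\ref{bernproof} (that $D_g^{(p-2s-1)}$ equals $p^{g-1}$ times a polynomial in $p,s$ that is even in $p$), and the paper does not get it ``for free'' from a standard reference: it proves it by induction on $g$ using the $\SO(3)$ fusion rule $D_{g+1}^{(p-2s-1)}=\sum_{y}K_{s,y}D_g^{(p-2y-1)}$, the explicit formula $K_{s,y}=(p-2\max(s,y))\min(s,y)$, Faulhaber's formula, and the parity of $B_m((p+1)/2)$ as a polynomial in $p$. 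Your parenthetical ``(standard for Verlinde-type sums, cf.\ Zagier)'' is not a proof of the parity statement for this specific sum; until it is filled in, part (i) of your argument is an assertion rather than an argument.

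Part (ii) is also underspecified as written. You say the exact degree of $\widetilde\varphi_j$ ``follows by reading off the leading coefficient in $s$, matching the degree of $\varphi_j$ under the substitution.'' But after substituting $c=\frac{p-1}{2}-s$, every $\varphi_{j'}(c)\,p^{j'}$ with $j'\le j$ contributes to the coefficient of $p^j$, so $\widetilde\varphi_j$ is not obtained from $\varphi_j$ alone. What one actually needs is the coefficient of $p^{j}s^{3g-2-j}$ in the top homogeneous part $F_{3g-2}(p,p/2-s)$; the paper computes it to be a non-zero multiple of $B_{2k}(\frac12)$ (for $j=g-1+2k$), and non-vanishing of $B_{2k}(\frac12)$ is what makes the degree exactly $3g-2-j$. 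So both halves of your outline need the same missing ingredient: an explicit Bernoulli-polynomial computation. The route you sketch (direct Verlinde trigonometric sum plus Chebyshev identity) is plausibly viable and genuinely different from the paper's fusion-rule induction, but as it stands it leaves the crucial step as an unproved claim.
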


Here we say that a polynomial $f(x)$ has degree exactly $n$ if it has degree $n$ and the coefficient of $x^n$ is non-zero. 
In genus one, 
one has 
$D_1^{(2c)}= \frac p 2 - c - \frac 1 2$ 
and (hence) $D_1^{(p-2s-1)}=s$.

\begin{proof}[Proof of  Theorem \ref{eo}]
Let $\ell_i$ be the skein in $\Si_g \times S^1$ given by placing $e_i$ along 
$* \times S^1$. For simplicity of notation, let us write $\left<\ell_{2c} \right>_A$ for $\left<\Sigma_g \times S^1, \ell_{2c} \right>_A$. 
By the trace property of TQFT, we have  $$\left<\ell_{2c} \right>_A =\dim V_p(\Si_g,2c)= D_g^{(2c)}~.$$
Since $K_0(D^2 \times S^1)$ is spanned by $\{ e_{2c}\,|\, c \geq 0 \}$, the image of 
      $i_0: K_0(D^2 \times S^1)\rightarrow     K_{(0,0)}(\Sigma_g \times S^1)$                           
is spanned by
$\{ \ell_{2c}\,|\, c \geq 0 \}$. Writing $W_0$ for $(\ev \circ\, i_0)(K_0(D^2 \times S^1))$, we have that $W_0$ is the subspace of $\BC^\UU_{a.e.}$ spanned by  $\{\left<\ell_{2c} \right>_A |\, c\geq 0 \} =  \{D_g^{(2c)} \, | \,c\geq 0 \}$. By Lemma~\ref{p^g}, each  $D_g^{(2c)}$ is expressed as a linear combination of powers of $p$. More precisely, each  $D_g^{(2c)}$ lies in the span of $$\BB=\{p^j \, \vert \, j\in\BE\cup\{g\} \} \subset \BC^\UU_{a.e.}~.$$

Now recall from Theorem~\ref{p} that $\BB$ is  a linearly  independent set in the $Q(A)$ vector space $\BC^\UU_{a.e.}$. We must show that $W_0$ is precisely the subspace with basis $\BB$. We already know that  $W_0$ is contained in this subspace. The reverse inclusion is proved as follows. The coordinate vector of $D_g^{(2c)}$ with respect to $\BB$ is the $(c+1)$-th column of a  $(g+1) \times  \infty $ matrix whose $j$-th row is given by the values $\varphi_j(c)$ ($c\geq 0$). Again by Lemma~\ref{p^g}, the $g+1$ polynomials $\varphi_j$ have distinct degrees and so are linearly independent. This implies that the  $g+1$ rows of our matrix are linearly independent. (Indeed, any non-trivial linear relation between the rows $$\sum_j \lambda_j \, (\varphi_j(0), \varphi_j(1), \ldots, \varphi_j(c),  \ldots ) =0$$ would imply that $\sum_j \lambda_j \varphi_j $ is a polynomial with infinitely many roots, hence zero, giving us a non-trivial linear relation between the $\varphi_j$.)  Thus our matrix has rank $g+1$ and so $W_0$ is indeed the subspace with basis $\BB$. This completes the proof of  the first statement  in Theorem~\ref{eo}.

 For the second statement in Theorem~\ref{eo}, let us  write $W_1$ for $(\ev \circ\, i_1)(K_1(D^2 \times S^1))$. Since $K_1(D^2 \times S^1)$ is spanned by $\{ e_{2s-1}\,|\, s \geq 1 \}$, we have, as before,  that $W_1$ is the subspace of $\BC^\UU_{a.e.}$ spanned by  $\{\left<\ell_{2s-1} \right>_A | s\geq 1 \}$. We now use a property of $\SO(3)$-TQFT: one can recolor a link component of a  link $L$ colored $2s-1$ with the color $p-2s-1$ without changing $\left<L\right>_{A}$ if $A$ is a primitive $2p$-th  root of unity where $p \ge 2s+1$ (see \cite[Lemma 6.3 (iii)]{BHMV1}). Thus 
for almost all $A$, 
\[   \left<\ell_{2s-1} \right>_A= D_g^{(p-2s-1)}.\] 
 Using now Lemma \ref{p^go} in place of  Lemma \ref{p^g} and arguing as above, it follows that $W_1$ is the subspace of $\BC^\UU_{a.e.}$ with basis $\{p^j \, \vert \, j\in\BE \}$. This completes the proof of Theorem \ref{eo}.
 \end{proof}

Note that for the recoloring argument used in the above proof, it is important that we use $\SO(3)$-TQFT and not $\SU(2)$-TQFT.
\section{Proof of Lemmas~\ref{p^g} and~\ref{p^go}}\label{sec7}

We begin by collecting some properties of Bernoulli numbers and Bernoulli polynomials that will be needed. The Bernoulli numbers $B_k$ are defined by 
\begin{equation*} \frac {t} {e^{t}-1}= \sum_{k=0}^\infty B_k
\frac {t^k} {k!}~.
\end{equation*} We have $B_0=1$, $B_1=-1/2$ and $B_n=0$ for all odd $n\geq 3,$ while $B_{2n} \ne 0$ for all $n\geq 0$. The Bernoulli polynomials $B_m(x)$ can be defined by either of the following two expressions which can be found in  \cite[Ex. 12, p.248]{IR}: 
\begin{equation}\label{bern1}
B_m(x)=\sum_{\ell =0}^m \binom m \ell x^{m-\ell} B_\ell~.
\end{equation} 
\begin{equation}\label{bern2}
\sum_{n=0}^\infty B_n(x) \frac {t^n} {n!} = \frac {t e^{tx}} {e^t-1}~.
\end{equation}
Finally we will need the fact that
\begin{equation}\label{bern3}
B_{m}(\textstyle{\frac 1 2} )=(2^{1-m}-1) B_m .
\end{equation} (see \cite[Ex.22 p.248 with typo corrected]{IR}).

We can now give the proof of Lemmas~\ref{p^g} and~\ref{p^go}.
We start with the following residue formula for $D_g^{(2c)}$.  We use the notation
\begin{equation*}
\mathrm{s}(t)=
\frac{\sinh(t)}{t}= \sum_{k=0}^\infty \frac {t^{2k}}{(2k+1)!}~.
\end{equation*}
Then for 
   $g\geq 1$,
we have 
\begin{equation}\label{f5}
D_g^{(2c)} = \frac{(-p)^g}{2} 
\left( 4^{1-g}\frac{2c+1}{p}   \res_{t=0}\left(\frac{2pt}{e^{2pt}-1} 
\frac{\mathrm{s}\left((2c+1)t\right)}{\mathrm{s}(t)^{2g-1}} \frac{dt}{t^{2g-1}}\right)
- \binom{c+g-1}{2g-2}
\right)
\end{equation}
 A proof of this residue formula is given in \cite[Prop. 6.6]{GM}. Recall that for a Laurent series in a variable $t$, its residue at $t=0$ is simply the coefficient of $t^{-1}$. Thus the residue formula gives an explicit expression for $D_g^{(2c)}$  as a polynomial in $p$ and $c$. Moreover, it not hard to see that this polynomial is of the form 
\begin{equation}\label{f6}
D_g^{(2c)}=p^{g-1}X(p,c) + p^g Y(c)
\end{equation} for polynomials $X(p,c)$ and $Y(c)$ where $X(p,c)$ is even as a polynomial in $p$. (Here $X(p,c)$ comes from the residue term in (\ref{f5}), and $Y(c)$ comes from the binomial coefficient in (\ref{f5}). The fact that $X(p,c)$ is even as a polynomial in $p$ follows from the fact that $\mathrm{s}(t)$ is an even power series in $t$.) The following proposition gives the leading term of this polynomial. 
 
\begin{prop}\label{leadt} Let $F_n(p,c)$ be the homogeneous part of total degree $n$ in $D_g^{(2c)}$ viewed as a polynomial in $p$ and $c$. One has $F_n(p,c)=0$ for $n>3g-2$ and 
 \begin{equation}\label{f7} F_{3g-2}(p,c)= (-1)^g p^{g-1}\sum_{k=0}^{2g-1} \frac{B_k}{k!(2g-1-k)!}
  {c^{2g-1-k}}p^{k}
\end{equation}
\end{prop}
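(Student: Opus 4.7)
The plan is to extract the total-degree-$(3g-2)$ homogeneous part of each of the two summands on the right-hand side of the residue formula \eqref{f5} and verify that they assemble into \eqref{f7}.

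For the residue term, I would expand the integrand as a product of three power series in $t$: the Bernoulli generating function $\frac{2pt}{e^{2pt}-1}=\sum_{i\geq 0}\tfrac{B_i(2p)^it^i}{i!}$, the defining series $\mathrm{s}((2c+1)t)=\sum_{j\geq 0}\tfrac{(2c+1)^{2j}t^{2j}}{(2j+1)!}$, and an even expansion $\mathrm{s}(t)^{-(2g-1)}=1+O(t^2)$ whose explicit coefficients will turn out to be irrelevant. The residue is the coefficient of $t^{2g-2}$ in the product, which is a finite sum over triples $(i,j,\ell)$ with $i+2j+2\ell=2g-2$. Each such summand has $(p,c)$-degree equal to $i+2j\leq 2g-2$, with equality iff $\ell=0$, so the residue is a polynomial in $p,c$ of total degree at most $2g-2$ and its top piece involves only $\ell=0$ terms. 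After multiplying by the outside factor $\tfrac{(-p)^g}{2}\cdot 4^{1-g}\cdot\tfrac{2c+1}{p}$ and replacing $(2c+1)$ and $(2c+1)^{2j}$ by their leading monomials $2c$ and $(2c)^{2j}$, the arithmetic identity $4^{1-g}\cdot 2^{i+2j}=1$ when $i+2j=2g-2$ collapses the degree-$(3g-2)$ contribution to
\[
(-1)^g p^{g-1}\sum_{\substack{k=0\\k\text{ even}}}^{2g-2}\frac{B_k}{k!(2g-1-k)!}\,p^k c^{2g-1-k},
\]
which matches precisely the even-$k$ summands in \eqref{f7}.

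For the binomial term, $\binom{c+g-1}{2g-2}$ is a polynomial in $c$ of degree $2g-2$ with leading coefficient $\tfrac{1}{(2g-2)!}$, so after multiplication by $-\tfrac{(-p)^g}{2}$ it contributes the single monomial $\tfrac{(-1)^{g+1}}{2(2g-2)!}\,p^g c^{2g-2}$ of total degree $3g-2$, which is exactly the $k=1$ term of \eqref{f7} in view of $B_1=-\tfrac12$. Since $B_k=0$ for every odd $k\geq 3$, the remaining odd-$k$ summands of \eqref{f7} vanish automatically, so adding the residue and binomial contributions produces \eqref{f7} and simultaneously establishes $F_n(p,c)=0$ for $n>3g-2$. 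I expect the main pitfall to be the constant bookkeeping in the residue calculation, particularly the cancellation $4^{1-g}\cdot 2^{2g-2}=1$ and the coincidence that $B_1=-\tfrac12$ is the exact factor needed to make the binomial contribution fit the $k=1$ slot of \eqref{f7}; I would sanity-check the final formula against $g=1$, where $D_1^{(2c)}=p/2-c-\tfrac12$ yields $F_1(p,c)=p/2-c$, in agreement with $-(B_0 c+B_1 p)$.
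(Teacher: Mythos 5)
Your computation is correct and is exactly the ``straightforward computation'' that the paper explicitly leaves to the reader (Proposition~\ref{leadt} is stated with the remark that it ``is proved from the residue formula (\ref{f5}) by a straightforward computation which we leave to the reader''), so there is no alternative argument in the paper to compare against. The degree bookkeeping is right: the residue is the coefficient of $t^{2g-2}$ in the triple product, each term there has $(p,c)$-degree $i+2j\leq 2g-2$ with equality forcing $\ell=0$, and after multiplying by the degree-$g$ leading part $(-1)^g 4^{1-g}p^{g-1}\cdot 2c\cdot\tfrac12$ of the prefactor, the cancellation $4^{1-g}\cdot 2^i\cdot 2^{2j}\cdot 2\cdot\tfrac12=4^{1-g}\cdot 2^{2g-2}=1$ produces precisely the even-$k$ part of \eqref{f7}. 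The binomial term contributes only the $k=1$ slot via $B_1=-\tfrac12$, the odd $k\geq 3$ terms of \eqref{f7} are vacuous since those Bernoulli numbers vanish, and the same degree count shows $F_n=0$ for $n>3g-2$. The $g=1$ sanity check also agrees.
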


Proposition~\ref{leadt}  is proved from the residue formula (\ref{f5}) by a straightforward computation which we leave to the reader. 

\begin{proof}[Proof of Lemma \ref{p^g}] Since $D_g^{(2c)}$ has total degree $3g-2$ in $p$ and $c$, it can be written as $\sum_j\varphi_j(c)\, p^j~$ for some polynomials $\varphi_j$ in $c$ of degree $3g-2-j$. But since $D_g^{(2c)}$ is also of the form given in Eq.~(\ref{f6}), the polynomials $\varphi_j$ are zero except for $j\in\BE \cup \{g\}$ where $\BE= \{g-1+2j \, \vert \,  j=0,1, \ldots, g-1\}$ as defined in the previous section. It only remains to show that the $g+1$ non-zero polynomials $\varphi_j$ have degree exactly $3g-2-j$. But this is clear from Proposition~\ref{leadt}, which tells us that, up to multiplication by a non-zero number, the leading term of $\varphi_{g-1+2j}$ is  the Bernoulli number $B_{2j}$, and the leading term of $\varphi_g$ is the Bernoulli number $B_1$. Since these Bernoulli numbers are non-zero, this completes the proof. 
\end{proof}

For the proof of Lemma \ref{p^go}, observe that we obtain $D_g^{(p-2s-1)}$ from $D_g^{(2c)}$ by the substitution $c=(p-1)/2 - s$. Thus  $D_g^{(p-2s-1)}$ is again a polynomial of total degree $3g-2$ in $p$ and $s$. We need the following lemma whose proof we delay.

\begin{lem}\label{bernproof} $D_g^{(p-2s-1)}$ can be written as $p^{g-1}$ times a polynomial in $p$ and $s$ which is even as a polynomial in $p$.
\end{lem}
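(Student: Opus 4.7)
Plan: I would deduce Lemma~\ref{bernproof} from two polynomial properties of $F(p,s) := D_g^{(p-2s-1)} \in \BQ[p,s]$: divisibility by $p^{g-1}$, and the parity relation $F(-p,s) = (-1)^{g-1}F(p,s)$, which is exactly what it means for $F/p^{g-1}$ to be even in $p$. Divisibility is immediate from~\eqref{f6}. The parity relation I would derive by combining the oddness of $F$ in $s$ with symmetries of the residue formula~\eqref{f5}.

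The key intermediate identity is the polynomial antisymmetry
\[
D_g^{(2c)} + D_g^{(2(p-1-c))} = 0 \qquad \text{in } \BQ[p,c],
\]
which, after substituting $c = (p-1)/2 - s$ (so that $p-1-c = (p-1)/2 + s$), becomes $F(p,s) + F(p,-s) = 0$. To prove this antisymmetry from~\eqref{f5}, I would apply the identity $\sinh(ut) + \sinh((2p-u)t) = 2\sinh(pt)\cosh((p-u)t)$ with $u = 2c+1$, combined with $e^{2pt}-1 = 2e^{pt}\sinh(pt)$, to collapse the sum of the two residue terms into $(-p)^g \cdot 4^{1-g}\,\res_{t=0}\tfrac{\cosh((p-2c-1)t)}{e^{pt}\sinh(t)^{2g-1}}\,dt$. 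Splitting $\cosh((p-2c-1)t)/e^{pt} = \tfrac{1}{2}\bigl(e^{-(2c+1)t} + e^{-(2p-2c-1)t}\bigr)$ then reduces everything to the key residue identity
\[
\res_{t=0}\frac{e^{-at}}{\sinh(t)^{2g-1}}\,dt \;=\; 4^{g-1}\binom{\tfrac{a-1}{2}+g-1}{2g-2}, \qquad (\star)
\]
after which the two binomial terms in $D_g^{(2c)} + D_g^{(2(p-1-c))}$ exactly cancel the residue contribution.

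The identity $(\star)$ is the main technical obstacle. Both sides are polynomials in $a$ of degree $2g-2$ with leading coefficient $\tfrac{1}{(2g-2)!}$, and both are even in $a$ (the left hand side because the Laurent expansion of $\sinh(t)^{-(2g-1)}$ involves only odd powers of $t$). The right hand side vanishes at $a = \pm 1, \pm 3, \ldots, \pm(2g-3)$, so it suffices to check that the left hand side also vanishes at $a = 2k+1$ for $0 \le k \le g-2$. Substituting $u = 2t$ and then $w = e^u - 1$ converts $\res_{t=0}e^{-(2k+1)t}/\sinh(t)^{2g-1}\,dt$ into $2^{2g-2}\binom{g-2-k}{2g-2}$, which is $0$ in that range.

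To pass from the oddness of $F$ in $s$ to the parity relation in $p$: the residue $R(p,c)$ appearing in~\eqref{f5} is even in $p$ (since $\tfrac{2pt}{e^{2pt}-1} + pt = pt\coth(pt)$ is even in $pt$) and satisfies $R(p,c) = R(p,-c-1)$ (as $\mathrm{s}$ is even). After substituting $c = (p-1)/2 - s$, these translate into $\tilde X(-p,s) = -\tilde X(p,-s)$ and $\tilde Y(-p,s) = \tilde Y(p,-s)$, where $\tilde X(p,s) := X(p,(p-1)/2-s)$ and $\tilde Y(p,s) := Y((p-1)/2-s)$ are the shifted versions of the polynomials $X, Y$ from~\eqref{f6}. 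A direct computation then yields
\begin{align*}
F(-p,s) &= (-p)^{g-1}\tilde X(-p,s) + (-p)^g\tilde Y(-p,s) \\
&= (-1)^g\bigl(p^{g-1}\tilde X(p,-s) + p^g\tilde Y(p,-s)\bigr) \\
&= (-1)^g F(p,-s) = (-1)^{g-1}F(p,s),
\end{align*}
using the oddness of $F$ in $s$ at the last step. This is the required evenness of $F/p^{g-1}$ in $p$.
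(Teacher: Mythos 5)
Your proof is correct, but it takes a genuinely different route from the paper. The paper proves Lemma~\ref{bernproof} by induction on genus: it formulates the stronger statement $\mathrm{S}_g$ that $D_g^{(p-2s-1)}$ is $p^{g-1}$ times a polynomial even in $p$ and odd in $s$, verifies $\mathrm{S}_1$ via $D_1^{(p-2s-1)}=s$, and carries out the inductive step via the fusion recursion $D_{g+1}^{(p-2s-1)}=\sum_{y=1}^d K_{s,y}D_g^{(p-2y-1)}$, reducing everything to a statement about power sums $\sum_{y=1}^d K_{s,y}y^{2\beta-1}$ that is handled with Faulhaber's formula and the parity of Bernoulli polynomials at $(p+1)/2$. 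You instead work directly from the residue formula~\eqref{f5}: you isolate the Verlinde antisymmetry $D_g^{(2c)}+D_g^{(2(p-1-c))}=0$ (equivalently, oddness of $F(p,s):=D_g^{(p-2s-1)}$ in $s$) as the key intermediate fact, establish it via the residue identity $(\star)$ (whose proof by degree/parity/root-counting and the substitution $w=e^{2t}-1$ is sound), and then transfer the $s$-parity into the required $p$-parity using the evenness of $X(p,c)$ in $p$ and the $c\mapsto -c-1$ symmetry of the residue and the binomial coefficient. I checked the details and they are all correct; in particular $\tilde X(-p,s)=-\tilde X(p,-s)$ and $\tilde Y(-p,s)=\tilde Y(p,-s)$ do follow from the stated symmetries, and $(\star)$ is consistent for small $g$. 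Both proofs ultimately rest on the same two parities of $F/p^{g-1}$, so they establish the same strengthened statement $\mathrm{S}_g$; the paper's induction avoids explicit residue manipulations at the cost of invoking the fusion rules, while your approach is non-inductive and stays closer to the residue formula the paper already uses for Lemma~\ref{p^g}, at the price of proving the additional technical identity $(\star)$. Your intermediate antisymmetry $D_g^{(2c)}+D_g^{(2(p-1-c))}=0$ is a well-known Weyl-type antiinvariance of the Verlinde polynomial, which gives an independent sanity check on your computation.
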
 
\begin{proof}[Proof of Lemma \ref{p^go}] By Lemma~\ref{bernproof} and formula (\ref{f6}), we see that  $D_g^{(p-2s-1)}$ (being of total degree $3g-2$ in $p$ and $s$) can be written as $\sum_{j\in\BE} \widetilde\varphi_j(s)\, p^j~$ where $\widetilde\varphi_j$ is a polynomial in $s$ of degree $3g-2-j$. We only have to show that for $j=g-1+2k\in\BE$, the leading coefficient of $\widetilde\varphi_j$ is non-zero. This coefficient is the coefficient of $p^{g-1+2k}s^{2g-1-2k}$ in $F_{3g-2}(p,p/2-s)$. A straightforward calculation using the expression (\ref{f7}) gives that this coefficient is $$ \frac {(-1)^{g+1}} {(2g-2k+1)! (2k)!} \sum_{\ell =0}^{2k} \binom{2k}{\ell} 2^{-2k+\ell} B_\ell~.$$ Up to the non-zero prefactor  $(-1)^{g+1}/((2g-2k+1)! (2k)!)$, this is $B_{2k}(\textstyle \frac 1 2)$ which is a non-zero multiple of $B_{2k}$ by Eq.~(\ref{bern3}) and hence is non-zero. 
\end{proof}

\begin{proof}[Proof of Lemma \ref{bernproof}] We proceed by induction on the genus $g$. It will be convenient to prove the following stronger
\vskip 3pt
Statement $\mathrm{S}_g$: {\em  $D_g^{(p-2s-1)}$ can be written as $p^{g-1}$ times a polynomial in $p$ and $s$ which is even as a polynomial in $p$ and odd as a polynomial in $s$.}
\vskip 3pt
Statement $\mathrm{S}_1$ holds since $D_1^{(p-2s-1)} =\dim V_p(\Sigma_1,p-2s-1) = s$.

For the induction step, we use the fusion rules of $\SO(3)$-TQFT. Let $$K_{s,y} = \dim V_p(\Sigma_1, (p-2s-1, p-2y-1))$$ be the dimension of the TQFT vector space associated to a genus one surface with two colored points colored $p-2s-1$ and $p-2y-1$ respectively. Here $1\leq s,y \leq d=(p-1)/2$. Then the fusion rules imply the following recursion formula: 
\begin{equation}\label{fus}
D_{g+1}^{(p-2s-1)}=\sum_{y=1}^d K_{s,y} D_g^{(p-2y-1)}~.
\end{equation}

The induction hypothesis  $\mathrm{S}_g$ states that monomials occurring in $D_g^{(p-2y-1)}$ are of the form $p^{g-1+2\alpha} y^{2\beta -1}$ for some integers $\alpha \geq 0$ and $\beta \ge 1$.  By the following Lemma~\ref{lem-last}, in the recursion formula (\ref{fus}) each of these monomials gives rise  to a sum of monomials occuring in $D_{g+1}^{(p-2s-1)}$ which satisfy 
      the conclusion of $\mathrm{S}_{g+1}$.
\end{proof}

\begin{lem}\label{lem-last} For all integers $\beta \geq 1$, we have that $\sum_{y=1}^d K_{s,y}y^{2\beta-1}$ can be written as $p$ times a polynomial in $p$ and $s$ which is even as a polynomial in $p$  and odd as a polynomial in $s$.
\end{lem}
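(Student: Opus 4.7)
The plan is to derive a closed form for the fusion coefficient $K_{s,y}$, substitute it into the sum, and then verify the $p$-even/$s$-odd structure of the result using classical parity properties of Bernoulli polynomials.

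\textbf{Computing $K_{s,y}$.}  The $\SO(3)$ fusion rules of \cite{BHMV1} express
\[ K_{s,y} = \sum_c D_1^{(c)}, \]
where $c=2m$ ranges over admissible colors, meaning $|s-y| \le m \le \min(p-s-y-1,\,s+y-1)$ (the standard triangle inequality together with the level constraint $a+b+c\le 2p-4$ applied to $a=p-2s-1$, $b=p-2y-1$). Since $D_1^{(2m)} = d-m$, the sum is an arithmetic progression, and a direct computation—whose value turns out to be identical in both regimes of the outer minimum—yields
\[ K_{s,y} \;=\; p\,\min(s,y) - 2sy. \]
Substituting this into $T_\beta(s):=\sum_{y=1}^d K_{s,y}\, y^{2\beta-1}$ gives $T_\beta(s) = p\,A(s) - 2s\, B_\beta$, where $A(s) = \sum_{y=1}^d \min(s,y)\, y^{2\beta-1}$ and $B_\beta = S_{2\beta}(d)$, writing $S_k(n) := \sum_{y=1}^n y^k$. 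Splitting $A(s)$ at $y = s$ gives $A(s) = P(s) + s\, S_{2\beta-1}(d)$ with $P(s) := S_{2\beta}(s) - s\, S_{2\beta-1}(s)$, so that, once $B_\beta/p$ is shown to be a polynomial,
\[ T_\beta(s)/p \;=\; P(s) + s\, S_{2\beta-1}(d) - 2s\, B_\beta/p. \]

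\textbf{Parity checks via Bernoulli polynomials.} Using $S_k(n) = (B_{k+1}(n+1)-B_{k+1})/(k+1)$ for $k\ge 1$, combined with $B_{2\beta+1} = 0$, the reflection identity $B_n(1-x) = (-1)^n B_n(x)$, and the difference equation $B_n(x+1) - B_n(x) = n x^{n-1}$, we get three facts. First, applying the reflection identity with $x=(p+1)/2$ shows that under $p\mapsto -p$ the quantity $B_n((p+1)/2)$ is multiplied by $(-1)^n$; hence $B_\beta = B_{2\beta+1}((p+1)/2)/(2\beta+1)$ is odd in $p$, so $B_\beta/p$ is a well-defined polynomial, even in $p$. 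Second, $S_{2\beta-1}(d)$ depends on $p$ only through $B_{2\beta}((p+1)/2)$, which is even in $p$ by the same observation. Third, the difference equation lets us rewrite $P(s) = B_{2\beta+1}(s)/(2\beta+1) - s[B_{2\beta}(s) - B_{2\beta}]/(2\beta)$, after which the reflection identity (applied with $x=s+1$) gives $P(-s) = -P(s)$. All three summands of $T_\beta(s)/p$ are therefore even in $p$, and all three are odd in $s$ (the first because $P$ is odd; the other two because they are $s$ times a quantity independent of $s$), completing the proof.

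The main obstacle is establishing the clean closed form $K_{s,y} = p\min(s,y) - 2sy$, and in particular checking that the two formally different arithmetic sums (depending on which term achieves the outer minimum) collapse to the same polynomial. Once that identity is in hand, the remaining parity bookkeeping is a routine manipulation of classical Bernoulli polynomial identities.
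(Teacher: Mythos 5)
Your proof is correct and follows the same overall strategy as the paper's: obtain the closed form for $K_{s,y}$, split the sum at $y=s$, and establish the parity of the resulting Faulhaber sums via Bernoulli-polynomial identities. The differences are mostly cosmetic but worth noting. Where the paper cites $K_{s,y}=(p-2\max(s,y))\min(s,y)$ from earlier work, you derive the equivalent form $p\min(s,y)-2sy$ directly from the $\SO(3)$ fusion rules, correctly checking that both branches of the outer minimum collapse to the same arithmetic-progression sum. Your decomposition $T_\beta(s)=p\bigl(P(s)+sS_{2\beta-1}(d)\bigr)-2sB_\beta$ is algebraically identical to the paper's $pU(s)+sV(p)$, just regrouped. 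The most substantive difference is the proof that $B_n((p+1)/2)$ has parity $(-1)^n$ under $p\mapsto -p$: the paper devotes a separate lemma to this (Lemma~\ref{lem-last2}), proving it from the generating function $\sum B_n(x)t^n/n!=te^{tx}/(e^t-1)$ together with the identity $B_m(\textstyle{\frac{1}{2}})=(2^{1-m}-1)B_m$, whereas you obtain it in one line from the reflection formula $B_n(1-x)=(-1)^nB_n(x)$ evaluated at $x=(p+1)/2$. That is a cleaner route to the same parity fact and would shorten the exposition.
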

\begin{proof}[Proof of Lemma \ref{lem-last}]
By \cite[Lemma 7.1]{GM}, we have $$ K_{s,y} = (p-2\max(s,y)) \min(s,y)~.$$
Splitting the sum into two parts, 
we get 
$$\sum_{y=1}^d K_{s,y}y^{2\beta-1} = \sum_{y=1}^s (p-2s)y^{2\beta} + \sum_{y=s+1}^d (p-2y)sy^{2\beta-1}~.$$
Collecting terms, this is equal to $pU(s) +sV(p)$ where

\begin{align*} U(s) &= \sum_{y=1}^s y^{2\beta} - s \sum_{y=1}^s y^{2\beta-1}\\
 V(p) &= p \sum_{y=1}^d y^{2\beta-1} -  2\sum_{y=1}^d y^{2\beta}
\end{align*}

Now recall the classical Faulhaber formula 
\begin{align}\label{expr1} \sum_{y=1}^N y^m &= \frac{N^m} 2 + \frac {N^{m+1}}{m+1} \sum_{j=0}^{\lfloor m/2 \rfloor} \binom {m+1} {2j}  B_{2j} N^{-2j}\\
\label{expr2} &= \frac 1 {m+1} (B_{m+1}(N+1) - B_{m+1})~.
\end{align}
(For the first expression, see \cite[Theorem 15.1]{IR} and recall that the odd Bernoulli numbers are zero except for $B_1=-\frac 1 2$. For the second expression, see \cite[p.231]{IR}.)  Using the expression (\ref{expr1}) with $N=s$, we see that $U(s)$ is an odd polynomial in $s$. Using the expression (\ref{expr2}) with $N=d=(p-1)/2$ and the following Lemma~\ref{lem-last2}, we see that $V(p)$ is an odd polynomial in $p$. Lemma~\ref{lem-last} follows from this.
\end{proof}

\begin{lem}\label{lem-last2}   We have that $B_{2\beta}((p+1)/2)$ is an even polynomial in $p$, and $B_{2\beta+1}((p+1)/2)$ is an odd polynomial in $p$.
\end{lem}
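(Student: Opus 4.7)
The plan is to deduce Lemma~\ref{lem-last2} from a single symmetry of Bernoulli polynomials, namely the reflection identity
$$B_n(1-x) = (-1)^n B_n(x)$$
valid for all $n\geq 0$. This identity is classical, and if a reference is not convenient I would derive it in one line from the generating function (\ref{bern2}): indeed, replacing $x$ by $1-x$ yields $te^{t(1-x)}/(e^t-1) = te^{-tx}\cdot e^t/(e^t-1)$, and the substitution $t\mapsto -t$ (using $e^t/(e^t-1) = -1/(e^{-t}-1)$) turns the right hand side into $\sum_n B_n(x)(-t)^n/n!$; comparing coefficients gives the identity.

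Next, I would substitute $x = (p+1)/2$, so that $1-x = (1-p)/2$. Setting $f_n(p) := B_n((p+1)/2)$, which is a polynomial in $p$, the reflection identity becomes
$$f_n(-p) \;=\; B_n\!\left(\tfrac{-p+1}{2}\right) \;=\; B_n\!\left(1-\tfrac{p+1}{2}\right) \;=\; (-1)^n B_n\!\left(\tfrac{p+1}{2}\right) \;=\; (-1)^n f_n(p).$$
Thus $f_n$ is an even polynomial in $p$ when $n$ is even, and an odd polynomial in $p$ when $n$ is odd. Specialising to $n=2\beta$ and $n=2\beta+1$ gives precisely the two statements of Lemma~\ref{lem-last2}.

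There is essentially no obstacle here: the only nontrivial ingredient is the reflection identity for Bernoulli polynomials, which is standard and can be verified directly from the generating function. The remainder of the proof is a one-line substitution.
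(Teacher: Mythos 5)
Your proof is correct and takes a genuinely different route from the paper's. The paper also starts from the generating function (\ref{bern2}) but proceeds by factoring: applying it at $x=(p+1)/2$ and at $x=1/2$ gives
$\sum_{n\geq 0} B_n(\tfrac{p+1}{2}) \tfrac{t^n}{n!} = e^{tp/2} \sum_{n\geq 0} B_n(\tfrac 12)\tfrac{t^n}{n!}$,
and since $B_n(\tfrac 12) = (2^{1-n}-1)B_n$ vanishes for all odd $n$ (Eq.~(\ref{bern3})), the right-hand side is $e^{tp/2}$ times an even power series in $t$; expanding $e^{tp/2}$ and matching the parity of the power of $t$ contributed by each factor then forces the power of $p$ in the $t^n$-coefficient to have the same parity as $n$. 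You instead invoke the reflection symmetry $B_n(1-x)=(-1)^n B_n(x)$ (which you derive from the same generating function) and substitute $x=(p+1)/2$, so that $f_n(-p)=(-1)^n f_n(p)$ with $f_n(p):=B_n(\tfrac{p+1}{2})$, giving the parity statement immediately. The two arguments are close relatives -- indeed the reflection identity specialized at $x=\tfrac 12$ is exactly the vanishing of odd $B_n(\tfrac 12)$ -- but your version is a bit cleaner as a pure symmetry argument, sidestepping the convolution/parity bookkeeping. The paper's version has the minor advantage that Eq.~(\ref{bern3}) is already needed elsewhere (in the proof of Lemma~\ref{p^go}), so it reuses an ingredient already on the table.
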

\begin{proof}[Proof of Lemma \ref{lem-last2}]

 We use the expression (\ref{bern2}) for the Bernoulli polynomial $B_n(x)$. Applying it first to $x=(p+1)/2$ and then to $x=1/2$, we have
\begin{equation}\label{com}\sum_{n=0}^\infty B_n({\textstyle\frac {p+1} 2}) \frac {t^n} {n!} = e^{tp/2} \frac {t e^{t/2}} {e^t-1} = e^{tp/2} \sum_{n=0}^\infty B_n({\textstyle \frac {1} 2}) \frac {t^n} {n!}~.\end{equation} 
Now observe that  the right hand side of (\ref{com}) is $e^{tp/2}$ times an even power series in $t$, as we have $B_n({\textstyle \frac {1} 2})= (2^{1-n}-1)B_n$ by Eq.~(\ref{bern3}) and this is zero for all odd $n$ (including $n=1$). The lemma now follows by comparing the coefficients of $t^{n}$ on both sides of (\ref{com}).
\end{proof}

\section{Further comments} \label{sec6}

\subsection{The question of finite dimensionality} While our Theorem~\ref{main} gives a lower bound for the dimension of $K(\Sigma_g\times S^1)$, no upper bound for this dimension is known at present when $g\geq 2$. In fact it is not even known whether $K(\Sigma_g\times S^1)$ is finite-dimensional for $g\geq 2$. 
We would like to mention that one may ask more generally whether  $K(M)$ is finite-dimensional for all closed manifolds $M$. We first heard about this question  in Oberwolfach in June 2015  from Julien March\'e, who had heard about it from Greg Kuperberg, who had been asked about it by Ed Witten.  We thank Greg Kuperberg for informing us that Witten had asked this question because of a $3+1$-dimensional TQFT which would associate an invariant in $K(\partial W)$ to a $4$-manifold $W$ with boundary $\partial W$.  
(Note that if such a TQFT exists and satisfies the usual finite-dimensionality axioms, then  $K(M)$ would indeed have to be finite-dimensional for all closed $3$-manifolds $M$.) 
 We were, however, unable to locate an explicit statement of this  question in Witten's papers.
The first written mention of this question seems to be in Carrega's paper \cite{C}.

No counterexamples are known. In particular,  for the prime closed 3-manifolds $M$ mentioned in the 
introduction where $K(M)$ is known, $K(M)$ is always finite dimensional. Moreover, work of Bullock  \cite{B} implies that  $K(M)$ is finite dimensional for all non-zero integral surgeries on the trefoil knot. Also work of Harris \cite{H} implies  that integral surgery on a $(2,2b)$ torus link with framings $a$ and $c$ will 
 produce $3$-manifolds $M$ with  finite dimensional $K(M)$
 if $a$, $b$ and $c$ satisfy a certain set of inequalities.

\subsection{On the image of the evaluation map} It is natural to ask whether  the inequality for $\dim K(\Sigma_g\times S^1)$ given in Theorem~\ref{main} is an equality. In view of what we have shown, this inequality is an equality if and only if the answer to both of the following questions is `yes':

\begin{ques} Is $\ev: K_x(\Sigma_g\times S^1) \rightarrow \BC^\UU_{a.e.}$ injective for every $x\in H_1(\Sigma_g\times S^1; \BZ/2)$?
\end{ques}
\begin{ques}\label{ques17} Is $\ev (K_x(\Sigma_g\times S^1)) \subset \BC^\UU_{a.e.}$ given by what we already found? In other words, is it true that
\begin{align}\ev (K_x(\Sigma_g\times S^1))\  \stackrel{\text{?}}{=} \  
\begin{cases} 
\Span_{Q(A)}\{p^j\, \vert \, j \in \BE \cup \{g\} \} & \text{\ if\ } x=(0,0) \\
\Span_{Q(A)}\{p^j\, \vert \, j \in \BE  \} & \text{\ if\ } x=(0, 1)\\
\Span_{Q(A)}\{p^{g-1} \} & \text{\ if\ } x\in\NN
\end{cases}
\end{align}
\end{ques}
\noindent (Recall $\BE=  \{g-1+2j \, \vert \,  j=0,1, \ldots, g-1\}$.) Note that we know that the R.H.S. is included in the L.H.S. by  Theorem~\ref{eo} and Lemmas~\ref{flat} and ~\ref{trans}. So the question is whether the L.H.S. is included in the R.H.S. 

At the time of this writing, we don't know the answer to these questions. We remark that in addition to the computations presented in the preceding sections of this paper, we have computed the evaluation map for skeins living in a `horizontal' solid torus,  {\em i.e.,} a tubular neighborhood of a simple closed curve $$\gamma \subset \Sigma_g\times 1 \subset \Sigma_g\times S^1~.$$ The result of these additional computations is compatible with a positive answer to Question~\ref{ques17}.

 Here is a more precise statement. Let $\gamma(m)$ denote $\gamma$ colored $m$, and assume w.l.o.g. that  $\gamma$ has framing tangent to $\Sigma_g\times 1$ as in Lemma~\ref{flat}.

\begin{prop}\label{prop18} If $\gamma$ is a non-separating simple closed curve on $\Sigma_g\times 1$, then 
\begin{equation}\label{nonsepa} \ev(\gamma(m)) \ = \ 
\begin{cases} 
D_g^{(0)} - \displaystyle{\sum_{i=1}^{m/2} \frac {(-p)^{g-1}} {(A^{2i} - A^{-2i})^{2g-2}}} & \text{\ if\ } m \equiv 0 \pmod 2 \\
\displaystyle{\sum_{i=1}^{(m+1)/2} \frac {(-p)^{g-1}} {(A^{2i-1} - A^{-2i-1})^{2g-2}}} & \text{\ if\ } m \equiv 1 \pmod 2 
\end{cases}
\end{equation} In particular, we have 
\begin{equation} \ev(\gamma(m)) \ \in \ 
\begin{cases}\Span_{Q(A)}\{ p^{g-1}, D_g^{(0)}  \} & \text{\ if\ } m \equiv 0 \pmod 2 \\ 
\Span_{Q(A)}\{p^{g-1} \}& \text{\ if\ } m \equiv 1 \pmod 2 
\end{cases}
\end{equation}

 If $\gamma$ is an essential separating curve, then
\begin{equation}\label{refsepa} \ev(\gamma(m)) \ \in \ 
\begin{cases}\Span_{Q(A)} \{p^j\, \vert \, j \in \BE \}  & \text{\ if\ } m \equiv 0 \pmod 2 \\ 
\Span_{Q(A)} \{p^j\, \vert \, j \in \BE\setminus \{3g-3\} \}& \text{\ if\ } m \equiv 1 \pmod 2 
\end{cases}
\end{equation} In particular,  $\ev(\gamma(m))$ lies in a codimension one subspace of  $(\ev \circ\, i_0)(K_0(D^2 \times S^1))$ as computed in Theorem~\ref{eo} (and, moreover, in a codimension two subspace when $m$ is odd). 
\end{prop}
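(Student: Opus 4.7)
The approach is to apply the TQFT trace formula
\[\ev(\gamma(m)) = \tr_{V_p(\Sigma_g)}(e_m(\gamma))\]
combined with the decomposition of $V_p(\Sigma_g)$ obtained by cutting $\Sigma_g$ along $\gamma$ and the Verlinde formula for the resulting dimensions.

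For the non-separating case, cutting along $\gamma$ gives
\[V_p(\Sigma_g) = \bigoplus_{k=0}^{d-1} V_p(\Sigma_{g-1,2}; 2k, 2k),\]
and $e_m(\gamma)$ acts on the $k$-th summand by the Hopf-link eigenvalue $\lambda_m(2k) = (-1)^m [(m+1)(2k+1)]/[2k+1]$. I would change basis in $K(D^2 \times S^1)$ from $\{e_m\}$ to the first-kind Chebyshev basis $\{\tilde{T}_n := 2T_n(z/2)\}_{n\geq 0}$, which satisfies $\tilde{T}_1 = z$ and the identity $e_{m+2} - e_m = \tilde{T}_{m+2}$ for all $m \geq 0$. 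This reduces the problem to proving the single-term formula
\[\ev(\tilde{T}_n(\gamma)) = \frac{(-1)^{n+1}(-p)^{g-1}}{(A^n - A^{-n})^{2g-2}} \qquad (n \geq 1),\]
which extends Lemma~\ref{flat} (the case $n=1$, since $\tilde{T}_1 = e_1$); given it, iterating $\ev(\gamma(m+2)) = \ev(\gamma(m)) + \ev(\tilde{T}_{m+2}(\gamma))$ with base cases $\ev(\gamma(0)) = D_g^{(0)}$ and $\ev(\gamma(1))$ from Lemma~\ref{flat} yields both formulas in~(\ref{nonsepa}) by induction on $m$.

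To establish this single-term formula, expand
\[\ev(\tilde{T}_n(\gamma)) = \sum_{k=0}^{d-1} \mu_n(2k)\, \dim V_p(\Sigma_{g-1,2}; 2k, 2k),\]
where $\mu_n(2k) = (-1)^n(A^{2n(2k+1)}+A^{-2n(2k+1)})$ is the $\tilde{T}_n$-eigenvalue on the $k$-th summand. Write the dimension via the Verlinde formula and interchange the summation order. The inner sum over $k$ collapses by virtue of the elementary identity
\[\sum_{k=0}^{d-1}\cos(2\pi M(2k+1)/p) = \begin{cases} d & \text{if } M \equiv 0 \pmod p, \\ -1/2 & \text{otherwise,} \end{cases}\]
to a single ``resonant'' term corresponding to the $\ell$ with $2\ell+1 \equiv \pm ns \pmod p$ (where $A = e^{i\pi s/p}$). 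A direct calculation then shows that this single contribution evaluates to the stated right-hand side.

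For the separating case, if $\gamma$ separates $\Sigma_g$ into $\Sigma_h$ and $\Sigma_{g-h}$ (with $1 \leq h \leq g-1$), cutting along $\gamma$ gives $V_p(\Sigma_g) = \bigoplus_k V_p(\Sigma_h; 2k) \otimes V_p(\Sigma_{g-h}; 2k)$, so
\[\ev(\gamma(m)) = \sum_{k=0}^{d-1} \lambda_m(2k)\, D_h^{(2k)}\, D_{g-h}^{(2k)}.\]
By Lemma~\ref{p^g} each factor is polynomial in $p$ with exponents in the genus-$h$ (resp.\ genus-$(g-h)$) analogue of $\BE \cup \{g\}$, so the product has $p$-exponents in $\BE \cup \{g\}$ together with values below $g-1$. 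The gluing identity $\sum_k D_h^{(2k)} D_{g-h}^{(2k)} = D_g^{(0)}$ together with $Y(0) = -\binom{g-1}{2g-2} = 0$ (from Section~\ref{sec7}) handles the $m=0$ case, and an analogous cancellation valid for arbitrary even $m$ gives the containment in $\Span_{Q(A)}\{p^j : j \in \BE\}$; for $m$ odd, a further symmetry argument forces the $p^{3g-3}$ coefficient to vanish as well. The main obstacles are the resonance analysis in the non-separating case (which requires care when $n$ resonates with multiple $\ell$'s modulo $p$) and the explicit verification of the cancellations in the separating case, which I expect to require Bernoulli-type manipulations along the lines of Section~\ref{sec7}.
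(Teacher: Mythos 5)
The paper itself gives no proof of Proposition~18 beyond the $m=1$ non-separating case (Lemma~\ref{flat}); it merely records the trace-formula starting point and says ``We omit the details, as they are somewhat involved.'' Your proposal begins from the same trace formula, so it is a genuine attempt to supply what the paper leaves out rather than a competing route.

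For the non-separating case, the change of basis to $\tilde T_n = e_n - e_{n-2}$ is the right move: the curve-operator eigenvalue $\mu_n(2k) = (-1)^n(A^{2n(2k+1)}+A^{-2n(2k+1)})$ is correct, and combined with the Verlinde sum and the elementary fact that $\sum_{k=0}^{d-1}\cos(2\pi M(2k+1)/p)$ equals $d$ or $-1/2$, the inner sum does collapse: for the unique $\ell_0\in\{1,\dots,d\}$ with $\ell_0\equiv\pm ns\pmod p$ the contribution is $\propto p$ and every other $\ell$ contributes $0$, and since $\sin^{2g-2}(\ell_0\pi/p)=\sin^{2g-2}(ns\pi/p)$ this reproduces $(A^n-A^{-n})^{-(2g-2)}$. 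Once the Verlinde normalization constant is pinned down (the $n=1$ case must recover Lemma~\ref{flat}, which gives a consistency check), and once one notes the resonance is unique for all but finitely many $p$ at fixed $n$ (finitely many $p$ divide $n$, so the exceptions are a.e.\ negligible), this part of the argument is sound. Iterating $e_{m+2}-e_m=\tilde T_{m+2}$ with base cases $\ev(\gamma(0))=D_g^{(0)}$ and Lemma~\ref{flat} then yields~(\ref{nonsepa}).

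The separating case has a real gap. You write that since each factor $D_h^{(2k)}$, $D_{g-h}^{(2k)}$ has $p$-exponents in the genus-$h$ (resp.\ genus-$(g-h)$) analogue of $\BE\cup\{g\}$, the product does as well, and this ``handles'' the containment. But $D_h^{(2k)}$ and $D_{g-h}^{(2k)}$ are bivariate polynomials in $p$ \emph{and} $k$, the $k$-summation runs from $0$ to $d-1=(p-3)/2$, and Faulhaber-type summation over $k$ raises the $p$-degree. Indeed the product $D_h^{(2k)}D_{g-h}^{(2k)}$ has $p$-degree at most $3g-6$, yet the target $\Span\{p^j: j\in\BE\}$ goes up to $p^{3g-3}$; the three extra powers of $p$ (and the claimed exact loss of $p^{3g-3}$ when $m$ is odd) must come from the $k$-sum twisted by $\lambda_m(2k)$, which your $p$-exponent bookkeeping does not engage. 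For $m=0$ the claim reduces to $D_g^{(0)}\in\Span\{p^j:j\in\BE\}$, which does follow from $Y(0)=0$ (for $g\geq2$) as you say, but for $m\geq1$ the twisted sums $\sum_{k}\mu_n(2k)\,k^a$ behave quite differently from Faulhaber sums — they are rational in $A$ rather than polynomial in $p$ — and the parity-of-$m$ dichotomy in~(\ref{refsepa}) is precisely the content that needs a concrete computation, not an ``analogous cancellation.'' You acknowledge this, but as written the separating case is asserted rather than derived, and the $p$-exponent argument given is not the mechanism that will produce the stated containment.
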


We also have explicit formulae for $\ev(\gamma(m))$ when $\gamma$ is separating but we omit them as they are not as nice as Formula~(\ref{nonsepa}). Implementing these formulae on a computer we can see that at least experimentally all powers of $p$ allowed by (\ref{refsepa}) may indeed occur in   $\ev(\gamma)$ for separating curves $\gamma$.

Note that the special case of Proposition~\ref{prop18} where $\gamma$ is non-separating and $m=1$ is contained in Lemma~\ref{flat} which is proved in 
      \S\ref{s2}.
The remaining cases of Proposition~\ref{prop18} can be proved by employing some techniques from $\SO(3)$-TQFT, starting from the expression  $$\ev(\gamma(m)) =\Big(\left< \gamma(m)\right>_A\Big)_{A\in\UU}= \Big( \tr_{V_p(\Sigma_g)} Z_p(\Sigma_g\times\I, \gamma(m) \times {\textstyle\frac 1 2} )\Big)_{A\in\UU}~.$$ We omit the details, as they are somewhat involved. Note that these remaining cases of Proposition~\ref{prop18} are not needed in the proof of our main result.

 \subsection{Connections with the work of March{\'e} and Santharoubane} In their recent paper \cite{MS}, March{\'e} and Santharoubane use the $\SU(2)$-WRT invariants of links in  $\Sigma_g\times S^1$ to construct, for every framed link $L$ in $\Sigma_g\times S^1$,  a Laurent polynomial $P_L\in \BZ[A,A^{-1}]$ with the following remarkable property. Let $\left<\Sigma_g\times S^1,L\right>_{{\zeta_{4r}}}^{\SU(2)}$ be the $\SU(2)$-invariant at some $4r$-th root $\zeta_{4r}\in\BC$. Then for all but finitely many $z$ on the unit circle, for any sequence of $4r$-th roots $\zeta_{4r}$ converging to $z$ (with $r$ going to infinity), the associated sequence of $\SU(2)$-invariants when divided by the invariant of the empty link converges to  $P_L(z)$; in fact there is an asymptotic expansion  $$\frac  {\left<\Sigma_g\times S^1,L\right>_{\zeta_{4r}}^{\SU(2)}} {\left<\Sigma_g\times S^1,\emptyset\right>_{\zeta_{4r}}^{\SU(2)}} = P_L(z) + O({\textstyle\frac 1 r}), \ \ \ \ \ \ (\zeta_{4r} \rightarrow z, r\rightarrow \infty)~. $$ 
March{\'e} and Santharoubane also show that the assignment $L\mapsto P_L$ satisfies the Kauffman skein relations, and develop methods to compute $P_L$ explicitly. 

Just as in $\SO(3)$-TQFT, one has that $$\left<\Sigma_g\times S^1,\emptyset\right>_{\zeta_{4r}}^{\SU(2)}= \dim V_{2r}^{\SU(2)}(\Sigma_g)$$
is a polynomial in $r$ of degree $3g-3$ (for $g\ge 2$). 
Thus March{\'e} and Santharoubane's theorem says that almost everywhere on the unit circle the invariant $\left<\Sigma_g\times S^1,L\right>_{{\zeta_{4r}}}^{\SU(2)}$ (which we may think of as the $\SU(2)$ evaluation map) grows at most like $r^{3g-3}$. Moreover,  the coefficient  of the term of order $r^{3g-3}$ is given in the limit as $r\rightarrow\infty$ by a polynomial $P_L(z)$ (again almost everywhere on the unit circle).

Note that if Question~\ref{ques17} had an affirmative answer, then for every framed link $L$ in $\Sigma_g\times S^1$ we would have $$\ev(L)= \sum_{j=g-1}^{3g-3} R_L^{(j)}(A)\, p^j$$ for some rational functions $R_L^{(j)}(A)$ (with $R_L^{(j)}(A)=0$ when $j\notin \BE \cup \{g\}$) and the rational function  $R_L^{(3g-3)}(A)$ would play a role similar to March{\'e} and Santharoubane's polynomial $P_L(z)$.

 \end{document}